\renewcommand\eqref[1]{(\ref{#1})}
\newcommand*{\mint}[1]{%
  \mint@l{#1}{}%
}
\newcommand*{\mint@l}[2]{%
  \@ifnextchar\limits{%
    \mint@l{#1}%
  }{%
    \@ifnextchar\nolimits{%
      \mint@l{#1}%
    }{%
      \@ifnextchar\displaylimits{%
        \mint@l{#1}%
      }{%
        \mint@s{#2}{#1}%
      }%
    }%
  }%
}
\newcommand*{\mint@s}[2]{%
  \@ifnextchar_{%
    \mint@sub{#1}{#2}%
  }{%
    \@ifnextchar^{%
      \mint@sup{#1}{#2}%
    }{%
      \mint@{#1}{#2}{}{}%
    }%
  }%
}
\def\mint@sub#1#2_#3{%
  \@ifnextchar^{%
    \mint@sub@sup{#1}{#2}{#3}%
  }{%
    \mint@{#1}{#2}{#3}{}%
  }%
}
\def\mint@sup#1#2^#3{%
  \@ifnextchar_{%
    \mint@sup@sub{#1}{#2}{#3}%
  }{%
    \mint@{#1}{#2}{}{#3}%
  }%
}
\def\mint@sub@sup#1#2#3^#4{%
  \mint@{#1}{#2}{#3}{#4}%
}
\def\mint@sup@sub#1#2#3_#4{%
  \mint@{#1}{#2}{#4}{#3}%
}
\newcommand*{\mint@}[4]{%
  \mathop{}%
  \mkern-\thinmuskip
  \mathchoice{%
    \mint@@{#1}{#2}{#3}{#4}%
        \displaystyle\textstyle\scriptstyle
  }{%
    \mint@@{#1}{#2}{#3}{#4}%
        \textstyle\scriptstyle\scriptstyle
  }{%
    \mint@@{#1}{#2}{#3}{#4}%
        \scriptstyle\scriptscriptstyle\scriptscriptstyle
  }{%
    \mint@@{#1}{#2}{#3}{#4}%
        \scriptscriptstyle\scriptscriptstyle\scriptscriptstyle
  }%
  \mkern-\thinmuskip
  \int#1%
  \ifx\\#3\\\else_{#3}\fi
  \ifx\\#4\\\else^{#4}\fi
}
\newcommand*{\mint@@}[7]{%
  \begingroup
    \sbox0{$#5\int\m@th$}%
    \sbox2{$#5\int_{}\m@th$}%
    \dimen2=\wd0 %
    \let\mint@limits=#1\relax
    \ifx\mint@limits\relax
      \sbox4{$#5\int_{\kern1sp}^{\kern1sp}\m@th$}%
      \ifdim\wd4>\wd2 %
        \let\mint@limits=\nolimits
      \else
        \let\mint@limits=\limits
      \fi
    \fi
    \ifx\mint@limits\displaylimits
      \ifx#5\displaystyle
        \let\mint@limits=\limits
      \fi
    \fi
    \ifx\mint@limits\limits
      \sbox0{$#7#3\m@th$}%
      \sbox2{$#7#4\m@th$}%
      \ifdim\wd0>\dimen2 %
        \dimen2=\wd0 %
      \fi
      \ifdim\wd2>\dimen2 %
        \dimen2=\wd2 %
      \fi
    \fi
    \rlap{%
      $#5%
        \vcenter{%
          \hbox to\dimen2{%
            \hss
            $#6{#2}\m@th$%
            \hss
          }%
        }%
      $%
    }%
  \endgroup
}
\numberwithin{equation}{section}
\theoremstyle{plain}
\newtheorem{thm}{Theorem}[section]
\newtheorem{cor}[thm]{Corollary}
\newtheorem{lem}[thm]{Lemma}
\theoremstyle{definition}
\newtheorem{rem}[thm]{Remark}
\newcommand{\G}{\mathbb{G}}
\newcommand{\Ra}{\mathcal{R}^{\frac{a}{\nu}}}
\newcommand{\R}{\mathcal{R}}
\newcommand{\X}{\mathbb{X}}
\newcommand{\p}{p^{*}_{\beta}}
\title[Logarithmic Hardy-Rellich inequalities on Lie groups]{Logarithmic Hardy-Rellich inequalities on Lie groups}
\author[M. Chatzakou]{Marianna Chatzakou}
\address{
	Marianna Chatzakou:
	\endgraf
    Department of Mathematics: Analysis, Logic and Discrete Mathematics
    \endgraf
    Ghent University, Belgium
  	\endgraf
	{\it E-mail address} {\rm marianna.chatzakou@ugent.be}
		}
\author[A. Kassymov]{Aidyn Kassymov}
\address{
  Aidyn Kassymov:
  \endgraf
   \endgraf
  Department of Mathematics: Analysis, Logic and Discrete Mathematics
  \endgraf
  Ghent University, Belgium
  \endgraf
  and
  \endgraf
  Institute of Mathematics and Mathematical Modeling
  \endgraf
  125 Pushkin str.
  \endgraf
  050010 Almaty
  \endgraf
  Kazakhstan
  \endgraf
  and
  \endgraf
  Al-Farabi Kazakh National University
  \endgraf
   71 Al-Farabi avenue
   \endgraf
   050040 Almaty
   \endgraf
   Kazakhstan
  \endgraf
	{\it E-mail address} {\rm aidyn.kassymov@ugent.be} and {\rm kassymov@math.kz}}
\author[M. Ruzhansky]{Michael Ruzhansky}
\address{
  Michael Ruzhansky:
  \endgraf
  Department of Mathematics: Analysis, Logic and Discrete Mathematics
  \endgraf
  Ghent University, Belgium
  \endgraf
 and
  \endgraf
  School of Mathematical Sciences
  \endgraf
  Queen Mary University of London
  \endgraf
  United Kingdom
  \endgraf
  {\it E-mail address} {\rm michael.ruzhansky@ugent.be}
  }
\begin{document}

\thanks{The authors are supported by the FWO Odysseus 1 grant G.0H94.18N: Analysis and Partial Differential Equations and by the Methusalem programme of the Ghent University Special Research Fund (BOF) (Grant number 01M01021). Michael Ruzhansky is also supported by EPSRC grant EP/R003025/2. \\
\indent
{\it Keywords:} logarithmic Hardy inequality; logarithmic Rellich inequality;  Poincar\'e inequality; logarithmic Sobolev inequality; Gross inequality; stratified groups; graded groups; Lie groups}

\begin{abstract} 
In this paper we obtain logarithmic Hardy and Rellich inequalities on general Lie groups. In the case of graded groups, we also show their refinements using the homogeneous Sobolev norms. In fact, we derive a family of weighted logarithmic Hardy-Rellich inequalities, for which logarithmic Hardy and Rellich inequalities are special cases. As a consequence of these inequalities, we also derive a Gross type logarithmic Hardy inequality on general stratified groups. An interesting feature of such estimate is that we consider the measure which is Gaussian only on the first stratum of the group. Such choice of the measure is natural in view of the known Gross type logarithmic Sobolev inequalities on stratified groups. The obtained results are new already in the setting of the Euclidean space $\mathbb R^n.$ Finally, we also present a simple argument for getting a logarithmic Poincar\'e inequality, as well as the logarithmic Hardy inequality for the fractional $p$-sub-Laplacian on homogeneous groups.
\end{abstract}

\maketitle

\tableofcontents

\section{Introduction}

In this paper we continue the investigation of the logarithmic functional inequalities in the setting of Lie groups. In \cite{CKR21b}, we have obtained the logarithmic Sobolev, Gagliardo-Nirenberg and Caffarelli-Kohn-Nirenberg inequalities on general Lie groups, and in this paper we deal with what can be called {\em (weighted) logarithmic Hardy-Rellich inequalities}, building on the terminology proposed by Del Pino, Dolbeault, Filippas and Tertikas in \cite{DDFT10}. The scope of results and the method of proof are rather different, meriting an independent presentation.

We will be also working in the settings of homogeneous, graded and stratified Lie groups where we can obtain some refinements or counterparts of the general results. 

The Sobolev inequality on the Euclidean space $\mathbb{R}^{n}$ takes the form
\begin{equation}
    \|u\|_{L^{p^{*}}(\mathbb{R}^{n})}\leq C \|\nabla u\|_{L^{p}(\mathbb{R}^{n})},
\end{equation}
where $1<p<n$, $p^{*}=\frac{np}{n-p},$ and $C=C(n,p)$ is a positive constant. Consequently, one also has the logarithmic Sobolev inequality  
\begin{equation}\label{ddlogin}
\int_{\mathbb{R}^{n}}\frac{|u|^{p}}{\|u\|^{p}_{L^{p}(\mathbb{R}^{n})}}\log\left(\frac{|u|^{p}}{\|u\|^{p}_{L^{p}(\mathbb{R}^{n})}}\right)dx\leq\frac{n}{p}\log\left(C\frac{\|\nabla u\|^{p}_{L^{p}(\mathbb{R}^{n})}}{\|u\|^{p}_{L^{p}(\mathbb{R}^{n})}}\right).
\end{equation}
It has a long history, starting with \cite{Gro75} and \cite{Wei78} for $p=2$, see  \cite{DD03} for some history for the range 
$1\leq  p<\infty$, as well as some best constants.

On the other hand, the classical Hardy inequality on $\mathbb R^n$ for $n\geq 3$ takes the form 
\begin{equation}\label{EQ:Hardy}
    \int_{\mathbb{R}^{n}}\frac{|u|^2}{|x|^2}dx\leq \frac{4}{(n-2)^2}\int_{\mathbb{R}^{n}}|\nabla u|^2 dx.
\end{equation}
It has a very long history of over 100 years, and we only refer to the open access book \cite{RS19} for a partial historical review. The logarithmic version of \eqref{EQ:Hardy}, called the {\em logarithmic Hardy inequality}, was proposed in \cite{DDFT10}, taking the form
\begin{equation}\label{EQ:Hardy-log}
    \int_{\mathbb{R}^{n}}\frac{|u|^2}{|x|^2}
    \log\left(|x|^{n-2}|u|^2\right)
    dx\leq \frac{n}{2}\log\left(C\int_{\mathbb{R}^{n}}|\nabla u|^2 dx\right),
\end{equation}
for all $u$ such that $\int_{\mathbb{R}^{n}}\frac{|u|^2}{|x|^2}dx=1.$ The $L^p$-Hardy inequality is an extension of \eqref{EQ:Hardy} for $1<p<n$, which can be stated as
\begin{equation}\label{EQ:Hardy-p}
    \int_{\mathbb{R}^{n}}\frac{|u|^p}{|x|^p}dx\leq \left(\frac{p}{n-p}\right)^p\int_{\mathbb{R}^{n}}|\nabla u|^p dx.
\end{equation}
We can also recall the Rellich inequality, for $n\geq 5$,
\begin{equation}\label{EQ:Rellich-2}
    \int_{\mathbb{R}^{n}}\frac{|u|^2}{|x|^4}dx\leq
    \frac{4}{n(n-4)}\int_{\mathbb{R}^{n}}|\Delta u|^2 dx.
\end{equation}
As usual, inequalities \eqref{EQ:Hardy} and \eqref{EQ:Rellich-2} hold for smooth compactly supported functions first, and then on spaces with respect to their completion in the corresponding norms on the right-hand sides of the respective inequalities.
The inequality \eqref{EQ:Rellich-2} goes back to Rellich \cite{Rel56}, and again, we refer to the open access book \cite{RS19} for many versions of this inequality and its history.

One of the aims of this paper is to obtain a version of a logarithmic $L^p$-Hardy inequality, that is, an analogue of \eqref{EQ:Hardy-log} for the inequality \eqref{EQ:Hardy-p}. We also want to obtain a logarithmic version of the Rellich inequality \eqref{EQ:Rellich-2}.
In fact, as a special case of the result in Theorem \ref{THM:weHargr} on $\mathbb R^n$, for all $1<p<\infty$ and $0<a<\frac{n}{p}$, we 
will obtain the family of {\em logarithmic Hardy-Rellich inequalities}
   \begin{equation}\label{weloghar1-Rn}
         \int_{\mathbb{R}^{n}}\frac{|u(x)|^{p}}{|x|^{ap}}\log\left(|x|^{n-ap}|u|^{p}\right) dx \leq \frac{n}{ap}\log\left(C\|(-\Delta)^{\frac{a}{2}}u\|^{p}_{L^{p}(\mathbb{R}^{n})}\right),
     \end{equation}
for all $u$ such that $\int_{\mathbb{R}^{n}}\frac{|u|^{p}}{|x|^{ap}}dx=1$. In particular, for $p=2$ and $a=1$, 
since $\|(-\Delta)^{\frac{1}{2}}u\|_{L^{2}}=\|\nabla u\|_{L^{2}}$,
this gives exactly the logarithmic Hardy inequality \eqref{EQ:Hardy-log}. For $a=2$, $1<p<\infty,$ and $2p<n$, this gives what we can call the {\em logarithmic Rellich inequality}
   \begin{equation}\label{weloghar1-Rn-Rellich}
         \int_{\mathbb{R}^{n}}\frac{|u(x)|^{p}}{|x|^{2p}}\log\left(|x|^{n-2p}|u|^{p}\right) dx \leq \frac{n}{2p}\log\left(C\int_{\mathbb{R}^{n}}|\Delta u|^{p} dx\right),
     \end{equation}
for all $u$ such that $\int_{\mathbb{R}^{n}}\frac{|u|^{p}}{|x|^{2p}}dx=1$.
In fact, we will also obtain the {\em weighted logarithmic Hardy-Rellich inequalities}: for all
 $1<p<\infty$ and $0\leq \beta<ap<n,$ we have
   \begin{equation}\label{weloghar1-Rn-w}
         \int_{\mathbb{R}^{n}}\frac{|u(x)|^{p}}{|x|^{ap-\beta}}\log\left(|x|^{(n-ap)(1-\frac{\beta}{ap-\beta})}|u|^{p}\right) dx \leq \frac{n-\beta}{ap-\beta}\log\left(C\|(-\Delta)^{\frac{a}{2}}u\|^{p}_{L^{p}({\mathbb{R}^{n}})}\right),
     \end{equation}
for all $u$ such that $\int_{\mathbb{R}^{n}}\frac{|u|^{p}}{|x|^{ap-\beta}}dx=1.$ These inequalities will be actually obtained in the setting of general Lie groups, but we specify them here for the Euclidean space $\mathbb R^n$ since they appear to be new already in the Euclidean setting. 

We can also mention a related family of {\em weighted logarithmic Sobolev inequalities} obtained in \cite[Theorem 4.1]{CKR21b}:
for all $1<p<\infty$ and $0\leq \beta <a p<n$ we have
\begin{equation}
    \label{gaus.log.sob-w-Rn-i}
    \int_{\mathbb R^n}|x|^{-\frac{\beta(n-ap)}{n-\beta}}|u|^p\log\left(|x|^{-\frac{\beta(n-ap)}{(n-\beta)}}|u|^p\right)\,dx \leq \frac{n-\beta}{ap-\beta}\log\left(C\|(-\Delta)^{\frac{a}{2}}u\|^{p}_{L^{p}({\mathbb{R}^{n}})}\right),\quad 
\end{equation}
for all $u$ such that $\int_{\mathbb R^n}|x|^{-\frac{\beta(n-ap)}{n-\beta}}|u|^p dx=1.$
The logarithmic Hardy inequality \eqref{weloghar1-Rn} can be seen as the critical case of the weighted logarithmic Sobolev inequalities \eqref{gaus.log.sob-w-Rn-i} for the critical order of the weight $\beta=ap.$

Another very important form of the logarithmic Sobolev inequalities is due to L. Gross \cite{Gro75}, who has shown the logarithmic Sobolev inequality in 
the form: 
\begin{equation}\label{EQ:Gross}
    \int_{\mathbb{R}^{n}}|g(x)|^{2}\log  \left(|g(x)|\right)d\mu(x)
    \leq  \int_{\mathbb{R}^{n}}|\nabla g(x)|^{2}d\mu(x),
\end{equation}
with the Gaussian measure $d\mu(x)=(2\pi)^{-n/2}e^{-|x|^2/2}dx$, for all 
$u$ such that $\|g\|_{L^{2}(\mu)}=1$. This inequality allows passing to infinite dimensions, and has many variants and applications; for a small selection of papers see e.g.
\cite{Ad79, AC79, Ros76, Wei78, Tos97, SZ92}. We also refer to \cite{Dav89} for an abstract discussion of log-Sobolev inequalities for generators of symmetric Markov semigroups.

In \cite{CKR21b}, we have proved a family of weighted Gross type logarithmic Sobolev inequalities, allowing also for different choices of weights. For example, consider the family of Euclidean norms on $\mathbb R^n$ given by
$|x|_r=(|x_1|^r+\cdots+|x_n|^r)^{\frac{1}{r}}$ for $1\leq r<\infty$, and by $|x|_\infty=\max\limits_{1\leq j\leq n}|x_j|$, for $x\in\mathbb R^n.$ 
Let $\mu$ be the Gaussian measure on $\mathbb{R}^{n}$ given by 
$$d\mu=\gamma_r e^{-\frac{|x|_2^2}{2}}dx, \quad 1\leq r\leq\infty,$$ 
with some (specified) normalisation constant $\gamma_r$. 
Then in \cite{CKR21b}, we have obtained the following {\em weighted Gross type logarithmic Sobolev inequalities}:
\begin{equation}
    \label{gaus.log.sob-w-Rn-i2}
    \int_{\mathbb R^n}|x|_r^{-\frac{\beta(n-2)}{n-\beta}}|g|^2\log\left(|x|_r^{-\frac{\beta(n-2)}{2(n-\beta)}}|g|\right)\,d\mu \leq \int_{\mathbb R^n} |\nabla g|^2\,d\mu,\quad 
    0\leq \beta <2<n,
\end{equation}
for all $g$ such that $\||x|_r^{-\frac{\beta(n-2)}{2(n-\beta)}}g\|_{L^2(\mu)}=1.$
We note that for $\beta=0$ this implies the Gross inequality \eqref{EQ:Gross}. In this paper, as a consequence of the logarithmic Hardy inequality, we obtain the critical case of \eqref{gaus.log.sob-w-Rn-i2} with $\beta=2$, which we can probably call the {\em Gross type logarithmic Hardy inequality}
\begin{equation}
    \label{gaus.log.Hard-i}
    \int_{\mathbb R^n}\frac{|g(x)|^2}{|x|_r^2} \log \left(|x|_r^{\frac{n-2}{2}}|g(x)| \right)\,d\mu \leq \int_{\mathbb R^n} |\nabla g|^2\,d\mu\,,
\end{equation}
for all $g$ such that $\left\|\frac{g}{|x|_r} \right\|_{L^2(\mu)}=1.$
By taking $\beta=1$ in Theorem \ref{thm.gaus.Hardy}, we also get the following interesting inequality:
 \begin{equation}
    \label{gaus.log.Hard-Rn-b1}
\int_{\mathbb R^n}\frac{|g(x)|^2}{|x|_r} \log \left(|g(x)| \right)\,d\mu \leq \int_{\mathbb R^n}|\nabla g|^2\,d\mu\,,
\end{equation}
for all $g$ such that $\left\|\frac{g}{|x|_r^{1/2}} \right\|_{L^2(\mu)}=1$.

 Similarly to \cite{CKR21b} for (weighted) logarithmic Sobolev inequalities, here we prove a version of \eqref{gaus.log.Hard-i} on stratified Lie groups. In this case, if $\G$ is a stratified Lie group of homogeneous dimension $Q$, the inequality \eqref{gaus.log.Hard-i} takes the form 
 \begin{equation}
    \label{gaus.log.Hard-G}
    \int_{\G}\frac{|g(x)|^2}{|x|^2} \log \left(|x|^{\frac{Q-2}{2}}|g(x)| \right)\,d\mu \leq \int_{\G} |\nabla_H g|^2\,d\mu\,,
\end{equation}
for all $g$ such that $\left\|\frac{g}{|x|} \right\|_{L^2(\mu)}=1.$ Here $\nabla_H$ is the horizontal gradient and $|\cdot|$ is any homogeneous quasi-norm on $\G$, while $\mu=\mu_1\otimes\mu_2$, where $\mu_1$ is a Gaussian measure on the first stratum of $\G$ and $\mu_2$ is the Lebesgue measure on the other strata; we refer to Theorem \ref{thm.gaus.Hardy} for the precise formulation of the weighted version of \eqref{gaus.log.Hard-G}. In particular, by taking $\beta=1$ in Theorem \ref{thm.gaus.Hardy}, we get the following interesting inequality:
 \begin{equation}
    \label{gaus.log.Hard-G-b1}
\int_{\G}\frac{|g(x)|^2}{|x|} \log \left(|g(x)| \right)\,d\mu \leq \int_{\G}|\nabla_{H}g|^2\,d\mu\,,
\end{equation}
for all $g$ such that $\left\|\frac{g}{|x|^{1/2}} \right\|_{L^2(\mu)}=1$.
 
From the point of view of the analysis on groups, the subject of inequalities has been extensively investigated. We can refer to 
\cite{VSCC93} for Sobolev inequalities on general unimodular Lie groups, \cite{AR20} for Sobolev inequalities on general locally compact unimodular groups, to \cite{BPTV19,BPV21} for Sobolev inequalities on general noncompact Lie groups, to \cite{FR17, FR16, RTY20} for graded groups, 
and to \cite{RY18a} for Hardy-Sobolev inequalities on general Lie groups.
For versions of the Euclidean weighted logarithmic Sobolev inequalities (with one weight) and Hardy inequalities with special weights we refer to 
\cite{Das21}. Hardy, Rellich and other inequalities with weights have been intensively investigated on homogeneous groups of different types, see \cite{RS19} and references therein. For logarithmic inequalities of Gross type, as the subject is immense, we only refer to surveys \cite{AB00, GZ03} as well as to some recent works on related coercive inequalities on stratified/Carnot groups e.g. \cite{BZ21a, CFZ21, HZ09}.

In this paper we establish a family of weighted logarithmic Hardy-Rellich inequalities on general Lie groups. The only assumption that we impose is that the group is connected, {\em otherwise it can be compact or noncompact, it can be unimodular or non-unimodular, and it can have polynomial or exponential volume growth at infinity.} Let us give an example.
Let $\G$ be a connected Lie group with local dimension $d$ and let $1<p<\infty$, $a>0$, be such that $0\leq \beta<ap<Q.$ Then we have 
    \begin{equation}\label{weloghargennon-i}
         \int_{\mathbb{G}}{\frac{|u(x)|^{p}}{|x|_{CC}^{ap-\beta}}}\log\left({|x|_{CC}^{(d-ap)(1-\frac{\beta}{ap-\beta})}|u|^{p}}\right) d\lambda(x) \leq \frac{d-\beta}{ap-\beta}\log\left(C{\|u\|^{p}_{L^{p}_{a}(\lambda)}}\right),
     \end{equation}
for all $u$ such that $\left\|\frac{u}{|x|_{CC}^{\frac{ap-\beta}{p}}}\right\|_{L^{p}(\lambda)}=1,$ where $d\lambda$ is the left Haar measure on $\G$, and $|x|_{CC}$ is the Carnot-Carath\'eodory distance from $x$ to the identity element $e$. In fact, the inequality \eqref{weloghargennon-i} also holds for the Carnot-Carath\'eodory distance replaced by the Riemannian distance on $\G$, compatible with the group structure. 
   As a special case of \eqref{weloghargennon-i} for $\beta=0$ we have 
   \begin{equation}\label{weloghar1non-i}
         \int_{\mathbb{G}}\frac{|u(x)|^{p}}{|x|_{CC}^{ap}}\log\left(|x|_{CC}^{d-ap}|u|^{p}\right) d\lambda(x) \leq \frac{d}{ap}\log\left(C\|u\|^{p}_{L_{a}^{p}(\lambda)}\right),
     \end{equation}
 for all $u$ such that $\left\|\frac{u}{|x|_{CC}^{a}}\right\|_{L^{p}(\lambda)}=1.$ 
 Thus for $a=1$ we obtain the logarithmic Hardy inequality, and for $a=2$ the logarithmic Rellich inequality. In fact, in
     Theorem \ref{THM:weHarnon} we also give these inequalities for all $u\not=0$, without the normalisation condition 
     $\left\|\frac{u}{|x|_{CC}^{\frac{ap-\beta}{p}}}\right\|_{L^{p}(\lambda)}=1.$
 
 Consequently, we also give several related versions of these inequalities as well as their refinements for some classes of Lie groups. For example, for graded groups we obtain \eqref{weloghargennon-i} with the homogeneous Sobolev space on the right and arbitrary quasi-norm. More precisely, let
 $\G$ be a graded Lie group of homogeneous dimension $Q$ and let
$\R$ be a positive Rockland operator of homogeneous degree $\nu$. Let $|\cdot|$ be an arbitrary homogeneous quasi-norm. Then for all $1<p<\infty$ and $0\leq \beta<ap<Q$ we have the {\em weighted logarithmic Hardy-Rellich inequalities}
   \begin{equation}\label{weloghar1-i}
         \int_{\mathbb{G}}\frac{|u(x)|^{p}}{|x|^{ap-\beta}}\log\left(|x|^{(Q-ap)(1-\frac{\beta}{ap-\beta})}|u|^{p}\right) dx \leq \frac{Q-\beta}{ap-\beta}\log\left(C\|\R^{\frac{a}{\nu}}u\|^{p}_{L^{p}(\G)}\right),
     \end{equation}
    for all $u$ such that  $\int_{\G}\frac{|u|^{p}}{|x|^{ap-\beta}}dx=1.$
 In Theorem \ref{THM:weHargr} we also give a version of this inequality without the normalisation condition  $\int_{\G}\frac{|u|^{p}}{|x|^{ap-\beta}}dx=1.$ 
 By taking $\beta=\frac{ap}{2}$, as a consequence of \eqref{weloghar1-i} we obtain an interesting inequality
   \begin{equation}\label{weloghar1non-nw1}
         \int_{\mathbb{G}}\frac{|u(x)|^{p}}{|x|^{\frac{ap}{2}}}\log\left(|u|\right) dx \leq \frac{2Q-ap}{ap^2}\log\left(C\|\R^{\frac{a}{\nu}}u\|^{p}_{L^{p}(\G)}\right),
     \end{equation}
    for all $u$ such that $\int_{\G}\frac{|u|^{p}}{|x|^{\frac{ap}{2}}}dx=1$.
 
 In particular, if $\G$ is a stratified group and $\nabla_H$ and $\Delta_\G$ are the horizontal gradient and the sub-Laplacian on $\G$, respectively, then for $p=2$, $\beta=0$, and $a=1$, we get the horizontal logarithmic $L^2$-Hardy inequality
    \begin{equation}\label{weloghar1-i-h}
         \int_{\mathbb{G}}\frac{|u(x)|^{2}}{|x|^{2}}\log\left(|x|^{Q-2}|u|^{2}\right) dx \leq \frac{Q}{2}\log\left(C\int_\G|\nabla_H u|^{2} dx\right),\quad Q\geq 3,
     \end{equation}
 and for $p=2$, $\beta=0$, and $a=2$, we get the logarithmic $L^2$-Rellich inequality
     \begin{equation}\label{weloghar1-i-r}
         \int_{\mathbb{G}}\frac{|u(x)|^{2}}{|x|^{4}}\log\left(|x|^{Q-4}|u|^{2}\right) dx \leq \frac{Q}{4}\log\left(C\int_\G|\Delta_\G u|^{2} dx\right),\quad Q\geq 5.
     \end{equation}
  Inequality \eqref{weloghar1-i} can be compared with the logarithmic Sobolev inequality on graded Lie groups established in \cite{CKR21b}, in a particular (unweighted) case, for
  $1<p<\infty$ and $0<a<\frac{Q}{p}$, given by 
\begin{equation}\label{a2=0i}
\int_{\mathbb{G}}\frac{|u|^{p}}{\|u\|^{p}_{L^{p}(\mathbb{G})}}\log\left(\frac{|u|^{p}}{\|u\|^{p}_{L^{p}(\mathbb{G})}}\right)dx\leq \frac{Q}{ap}\log\left(A\frac{\|\R^\frac{a}{\nu}u\|^p_{{L}^p(\G)}}{\|u\|^{p}_{L^{p}(\G)}}\right),
\end{equation}
for any Rockland operator $\R$ of homogeneous degree $\nu$. For the case $p=1$ in the form of the Shannon inequality on general homogeneous groups we refer to \cite{CKR21a}.

In Theorem \ref{REM:fractional} we also give the logarithmic Hardy-Rellich inequalities for fractional $p$-sub-Laplacians on general homogeneous groups. When working with the fractional Laplacians, we can avoid using the differential structure of the group, thus making an extension of \eqref{weloghar1-i} to general homogeneous groups possible. For the fractional Laplacians, 
the fractional logarithmic Sobolev inequality on the Heisenberg group and on homogeneous  groups was proved in \cite {FNQ18} and \cite{KRS20}, respectively.   

We will give the very brief definitions for the above objects in the subsequent sections. For different classes of homogeneous groups (graded, stratified) we can refer to a recent presentation in the open access book \cite{FR16}, based on the 
fundamental work of Folland and Stein \cite{FS}. 

\section{Logarithmic Hardy-Rellich inequalities on general Lie groups}
\label{SEC:generalLie}

In this section we prove the logarithmic Hardy-Rellich inequalities on general Lie groups. 
Let $\G$ be a noncompact connected Lie group with identity element $e$, and
let $X = \{X_1, . . . , X_n\}$ be a family of linearly independent, left-invariant vector fields on $\G$ satisfying H\"{o}rmander’s condition. 
Let $d\rho$ and $d\lambda$ be the right and left Haar measures on $\G$, respectively. If $\delta$ is the modular function on $\G$, we have $d\lambda = \delta d\rho$. 

To put the construction of the sub-Laplacian with drift into perspective, let $\chi$ be a continuous positive character of $\G$, and 
denote by $\mu_{\chi}$ the measure on $\G$ with density equal to $\chi$, with respect to the right Haar measure $\rho$ of $\G$, that is, $d\mu_{\chi} = \chi d\rho.$ Then it was shown in \cite{HMM05} that the  differential operator
\begin{equation}
    \Delta_{\chi}=-\sum_{i=1}^{n}\left(X_{j}^{2}+c_{j}X_{j}\right),
    \; c_{j} = (X_{j}\chi)(e), \; j = 1, \ldots , n,
\end{equation}
with domain $C_{0}^{\infty}(\G)$ is essentially self-adjoint on $L^{2}(\mu_{\chi})$.
Then one can define the Sobolev spaces on $\G$, associated to $\chi$, by
\begin{equation}
    L^{p}_{a}(\mu_{\chi}):=\{u:u\in L^{p}(\mu_{\chi}),\Delta^{\frac{a}{2}}_{\chi}u\in L^{p}(\mu_{\chi}) \},
\end{equation}
with the norm
\begin{equation}
    \|u\|_{L^{p}_{a}(\mu_{\chi})}:=\|u\|_{L^{p}(\mu_{\chi})}+\|\Delta^{\frac{a}{2}}_{\chi}u\|_{L^{p}(\mu_{\chi})}.
\end{equation}
In particular, for the left Haar measure $d\lambda$ we have, with $\chi=\delta$,
\begin{equation}
    L^{p}_{a}(\lambda):=\{u:u\in L^{p}(\lambda),\Delta^{\frac{a}{2}}_{\delta }u\in L^{p}(\lambda) \},
\end{equation}
with the norm
\begin{equation*}
    \|u\|_{L^{p}_{a}(\lambda)}:=\|u\|_{L^{p}(\lambda)}+\|\Delta^{\frac{a}{2}}_{\delta}u\|_{L^{p}(\lambda)}.
\end{equation*}
The embedding properties of the Sobolev spaces $L^{p}_{a}(\mu_{\chi})$ were established in \cite{BPTV19} for noncompact Lie groups and in \cite{RY18a} for general Lie groups by different methods. We will use the latter paper because this also gives a family of weighted Hardy-Sobolev inequalities important for us here, and also includes the case of compact Lie groups.

The family $X = \{X_1, . . . , X_n\}$ induces the Carnot-Carath\'{e}odory distance $d_{CC}(\cdot, \cdot)$ on $\G$. Let $B(c_{B},r_{B})$ denote the ball of radius $r_{B}$ with respect to this distance, centred at $c_{B}$. We will denote by 
\begin{equation}\label{EQ:dcc}
|x|_{CC}=d_{CC}(e,x)
\end{equation}
the Carnot-Carath\'{e}odory distance between $x$ and and the identity element $e$ of $\G$. If we denote by $V(r):=\rho(B_{r})$ the volume of the ball $B_{r}:=B(e,r)$ with respect to the right Haar measure on $\G$, then it is well-known that there exist two constants $d=d(\G,X)$ and $D=D(\G)$ such that
\begin{equation*}
    V(r)\approx r^{d},\,\,\,\,\forall r\in (0,1],
\end{equation*}
and
\begin{equation*}
    V(r)\lesssim e^{Dr},\,\,\,\,\forall r\in (1,\infty).
\end{equation*}
In this case $d$ and $D$ are called the local and global dimensions of the metric measure space $(\G,d_{CC}, \rho)$, respectively. The volume growth at infinity can be also polynomial.

In the case when $\G$ is a compact Lie group, we have that $r$ is bounded, and the dimension $d$ coincides with the Hausdorff dimension of $\G$ with respect to the family $X = \{X_1, . . . , X_n\}$, see e.g. \cite{AR20}.
Since $\G$ is then also unimodular, we have that the modular function is $\delta\equiv 1$, and it means that $d\lambda=d\rho$. Furthermore, with the character $\chi=1$, the Sobolev space $L^{p}_{a}(\mu_{\chi})$ coincides with Sobolev space defined by the sub-Laplacian $\Delta_\G=\sum\limits_{i=1}^{n}X_{i}^{2}$. 

Let us now recall the Hardy-Sobolev inequality from \cite{RY18a}, for general connected Lie groups. 

\begin{thm}\label{Harsobinnthm}
Let $\G$ be a connected Lie group. Let $0\leq \beta < d$ and $1< p,q <\infty$.
\begin{itemize}
    \item[(a)] If $a< \frac{d}{p}$, then we have 
  \begin{equation}\label{HSinnon}
      \left\|\frac{u}{|x|_{CC}^{\frac{\beta}{q}}}\right\|_{L^{q}(\lambda)}\leq C \|u\|_{L^{p}_{a}(\lambda)},
  \end{equation}
  for all $q\geq p$ such that $\frac{1}{p}-\frac{1}{q}\leq \frac{a}{d}-\frac{\beta}{dq}$;
  \item[(b)] If $\frac{d}{p}\leq a$, then \eqref{HSinnon} holds for all $q\geq p$.
\end{itemize}
\end{thm}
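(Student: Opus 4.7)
The plan is to reduce the inequality to a weighted mapping property of the Riesz-type potential $\Delta_{\delta}^{-a/2}$. Writing $u = \Delta_\delta^{-a/2} f$ with $f = \Delta_\delta^{a/2} u$, the estimate \eqref{HSinnon} is equivalent to the boundedness
$$
\Delta_\delta^{-a/2} : L^p(\lambda) \longrightarrow L^q\bigl(|x|_{CC}^{-\beta}\,d\lambda\bigr),
$$
and the natural object to understand is the kernel $K_a$ of this operator. Via the subordination formula
$$
\Delta_\delta^{-a/2} f(x) = \frac{1}{\Gamma(a/2)}\int_0^\infty t^{a/2-1} e^{-t\Delta_\delta} f(x)\,dt,
$$
the problem reduces to sharp upper bounds on the heat kernel of $\Delta_\delta$.

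First I would invoke the Varopoulos--Saloff-Coste--Coulhon machinery for heat kernels on connected Lie groups: thanks to H\"ormander's condition and the local volume comparison $V(r) \approx r^d$ for $r \le 1$, the heat kernel $h_t$ of $\Delta_\delta$ satisfies a Gaussian upper bound $h_t(x,y) \lesssim t^{-d/2}\exp(-c|y^{-1}x|_{CC}^2/t)$ for $t\le 1$, while for $t \ge 1$ it decays according to the global volume growth (polynomial or exponential). Integrating these bounds against $t^{a/2-1}$ and splitting the integral at $t=1$ yields a local Riesz-type estimate $K_a(x,y) \lesssim |y^{-1}x|_{CC}^{a-d}$ when $|y^{-1}x|_{CC}\le 1$ (valid in case (a) thanks to $a < d/p < d$), and suitable decay at infinity.

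Next I would run a Hedberg-type argument: for any $R>0$ split $K_a * f$ into $|y^{-1}x|_{CC}\le R$ and $|y^{-1}x|_{CC}>R$, bound the first piece by $R^{a}\mathcal{M}f(x)$ with $\mathcal{M}$ the Hardy--Littlewood maximal operator, bound the second by H\"older against the local $L^{p'}$ norm of $K_a$, and optimise in $R$. This gives the unweighted embedding, and inserting the weight $|x|_{CC}^{-\beta/q}$ through the Stein--Weiss inequality on the homogeneous-type space $(\G,d_{CC},\lambda)$ (at small scales) produces exactly the scaling condition $\tfrac{1}{p}-\tfrac{1}{q} \le \tfrac{a}{d}-\tfrac{\beta}{dq}$. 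The condition $q\ge p$ then absorbs the contribution from the global tail of $K_a$, whose fast decay allows Young-type convolution control at large scales. In case (b), when $a\ge d/p$, the local kernel $K_a(\cdot,y)$ lies in $L^{p'}_{\mathrm{loc}}$, so $\Delta_\delta^{-a/2} f$ is locally bounded, and together with the local integrability of the weight (ensured by $\beta<d$) one obtains the full range $q\ge p$ directly.

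The main obstacle is the dichotomy between polynomial local and possibly exponential global volume growth: the heat-kernel splitting must be handled separately on the two scales, and the weighted Stein--Weiss step genuinely operates only in the local, homogeneous-type regime. The use of $\Delta_\delta$ in place of the plain sub-Laplacian is essential, since it restores self-adjointness on $L^2(\lambda)$ in the non-unimodular setting and so validates the spectral/subordination calculus; this is precisely the construction of \cite{HMM05} on which the proof in \cite{RY18a} is built.
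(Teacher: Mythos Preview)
The paper does not prove this theorem at all: it is explicitly introduced with ``Let us now recall the Hardy--Sobolev inequality from \cite{RY18a}'' and then used as a black box in the proof of Theorem~\ref{THM:weHarnon}. There is therefore no ``paper's own proof'' to compare your attempt against.

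That said, your sketch is a reasonable outline of how such a result is established, and is in the spirit of the argument in \cite{RY18a}. One correction: since the right-hand side of \eqref{HSinnon} is the \emph{inhomogeneous} norm $\|u\|_{L^p_a(\lambda)}=\|u\|_{L^p(\lambda)}+\|\Delta_\delta^{a/2}u\|_{L^p(\lambda)}$, the reduction should go through the Bessel potential $(1+\Delta_\delta)^{-a/2}$ rather than the Riesz potential $\Delta_\delta^{-a/2}$; the subordination formula then involves $e^{-t}t^{a/2-1}e^{-t\Delta_\delta}$, and the extra exponential factor is exactly what makes the large-time integral converge regardless of the global volume growth. With that adjustment the local/global splitting you describe (local Riesz-type singularity controlled via Stein--Weiss on the local homogeneous-type structure, global tail controlled by the exponential damping and Young's inequality) is the correct route. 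Your remark that the drift construction of \cite{HMM05} is what allows the spectral calculus on $L^2(\lambda)$ in the non-unimodular case is also on point.
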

 We can now present the weighted logarithmic Hardy-Rellich inequality for general Lie groups (compact and non-compact).
 
\begin{thm}\label{THM:weHarnon}
Let $\G$ be a connected Lie group with local dimension $d$ and let $1<p<\infty$, $a>0$, be such that $0\leq \beta<ap<d.$ Then we have 
    \begin{equation}\label{weloghargennon}
         \int_{\mathbb{G}}\frac{\frac{|u(x)|^{p}}{|x|_{CC}^{ap-\beta}}}{\left\|\frac{u}{|x|_{CC}^{\frac{ap-\beta}{p}}}\right\|^{p}_{L^{p}(\lambda)}}\log\left(\frac{|x|_{CC}^{(d-ap)(1-\frac{\beta}{ap-\beta})}|u|^{p}}{\left\|\frac{u}{|x|_{CC}^{\frac{ap-\beta}{p}}}\right\|^{p}_{L^{p}(\lambda)}}\right) d\lambda(x) \leq \frac{d-\beta}{ap-\beta}\log\left(C\frac{\|u\|^{p}_{L^{p}_{a}(\lambda)}}{\left\|\frac{u}{|x|_{CC}^{\frac{ap-\beta}{p}}}\right\|^{p}_{L^{p}(\lambda)}}\right),
     \end{equation}
     for all $u\in L_{a}^{p}(\lambda)\backslash\{0\}$ such that $|x|_{CC}^{-\frac{ap-\beta}{p}}|u|\in L^{p}(\lambda)$.
     
   In particular, for all $u$ such that $\int_{\G}\frac{|u|^{p}}{|x|_{CC}^{ap-\beta}}d\lambda(x)=1$, we have 
   \begin{equation}\label{weloghar1non}
         \int_{\mathbb{G}}\frac{|u(x)|^{p}}{|x|_{CC}^{ap-\beta}}\log\left(|x|_{CC}^{(d-ap)(1-\frac{\beta}{ap-\beta})}|u|^{p}\right) d\lambda(x) \leq \frac{d-\beta}{ap-\beta}\log\left(C\|u\|^{p}_{L_{a}^{p}(\lambda)}\right).
     \end{equation}
\end{thm}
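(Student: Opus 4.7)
The plan is to derive \eqref{weloghargennon} from the Hardy-Sobolev inequality of Theorem \ref{Harsobinnthm} by applying Jensen's inequality on a probability measure built from $u$. Concretely, set
$M := \left\|\frac{u}{|x|_{CC}^{(ap-\beta)/p}}\right\|_{L^p(\lambda)}^p$ and
$A := (d-ap)\bigl(1-\tfrac{\beta}{ap-\beta}\bigr)$,
and consider the probability measure $d\nu := M^{-1}|x|_{CC}^{-(ap-\beta)}|u|^p\,d\lambda$ on $\G$. With $F := |x|_{CC}^A |u|^p / M$, the left-hand side of \eqref{weloghargennon} reads $\int_\G \log F\,d\nu$, so the problem reduces to bounding this quantity from above.

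The first step is Jensen's inequality for the concave function $\log$: for any $r>0$,
\begin{equation*}
\int_\G \log F\,d\nu = \frac{1}{r}\int_\G \log F^r\,d\nu \leq \frac{1}{r}\log\int_\G F^r\,d\nu.
\end{equation*}
A short calculation gives $\int_\G F^r\,d\nu = M^{-(r+1)}\int_\G |u|^{p(r+1)}\,|x|_{CC}^{-[(ap-\beta)-Ar]}\,d\lambda$. The second step is to tune $r$ so that this integral is exactly of Hardy-Sobolev type. Setting $r := \tfrac{ap-\beta}{d-ap}$ yields $Ar = ap-2\beta$, so the weight exponent collapses to $(ap-\beta)-Ar = \beta$, while $q := p(r+1) = \tfrac{p(d-\beta)}{d-ap}$. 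One checks that $q\geq p$, $\beta\in[0,d)$, and that the dimensional constraint $\tfrac{1}{p}-\tfrac{1}{q}=\tfrac{a}{d}-\tfrac{\beta}{dq}$ holds with equality; since $ap<d$ forces $a<d/p$, Theorem \ref{Harsobinnthm}(a) applies and gives
$\int_\G |u|^q|x|_{CC}^{-\beta}\,d\lambda \leq C^q \|u\|_{L^p_a(\lambda)}^q$.

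Combining the two steps,
\begin{equation*}
\int_\G \log F\,d\nu \leq \frac{1}{r}\log\!\left(\frac{C^q \|u\|_{L^p_a(\lambda)}^q}{M^{r+1}}\right) = \frac{r+1}{r}\log\!\left(\frac{C^p\|u\|_{L^p_a(\lambda)}^p}{M}\right),
\end{equation*}
and the identity $\tfrac{r+1}{r} = \tfrac{d-\beta}{ap-\beta}$ delivers exactly \eqref{weloghargennon} after absorbing $C^p$ into $C$. The normalised form \eqref{weloghar1non} is the specialisation $M=1$.

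The main obstacle, in my view, is identifying the correct parameter $r$: the coefficient $(d-\beta)/(ap-\beta)$ and the weight $(d-ap)(1-\beta/(ap-\beta))$ inside the logarithm look ad hoc, and what dictates them is precisely the requirement that the exponents appearing in $\int_\G F^r\,d\nu$ land on the Hardy-Sobolev curve $\tfrac{1}{p}-\tfrac{1}{q}=\tfrac{a}{d}-\tfrac{\beta}{dq}$ with weight parameter equal to $\beta$. Once this alignment is spotted, the rest is straightforward Jensen-plus-logarithm bookkeeping, and no regularisation is needed since the integrand on both sides is already finite by hypothesis.
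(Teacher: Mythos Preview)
Your proof is correct and takes a genuinely different route from the paper. The paper argues by a limiting/differentiation procedure: it first establishes, via H\"older's inequality, a family of interpolation inequalities indexed by $q\in(p,p^*_\beta)$ with $p^*_\beta=\frac{(d-\beta)p}{d-ap}$, then writes $q=p+r$, subtracts the $r=0$ identity, divides by $r$, and passes to the limit $r\to 0$ using l'H\^opital's rule on both sides; the Hardy--Sobolev inequality of Theorem~\ref{Harsobinnthm} at the endpoint $q=p^*_\beta$ is invoked only at the final step to bound $I_2=\int |u|^{p^*_\beta}|x|_{CC}^{-\beta}\,d\lambda$.

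Your argument bypasses the limit entirely: you recognise the left-hand side of \eqref{weloghargennon} as $\int\log F\,d\nu$ for a probability measure $\nu$, apply Jensen once with a free exponent $r$, and then choose $r=\frac{ap-\beta}{d-ap}$ so that $\int F^r\,d\nu$ lands exactly on the Hardy--Sobolev endpoint $(q,\beta)=(p^*_\beta,\beta)$. The algebraic identities $Ar=ap-2\beta$, $q=p(r+1)=p^*_\beta$, $\frac{r+1}{r}=\frac{d-\beta}{ap-\beta}$, and the equality case of the constraint $\frac{1}{p}-\frac{1}{q}=\frac{a}{d}-\frac{\beta}{dq}$ are all correct. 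Both proofs ultimately rest on the same endpoint Hardy--Sobolev estimate, but yours is considerably shorter and avoids the somewhat delicate l'H\^opital computations (in particular the differentiation of $r\mapsto\int |u|^p|x|_{CC}^{-ap+\beta\frac{p^*_\beta-p}{p^*_\beta-(r+p)}}d\lambda$ that the paper has to carry out). The paper's approach, on the other hand, makes the interpolation structure between the $q=p$ and $q=p^*_\beta$ endpoints more visible, which can be useful if one wants finer information for intermediate $q$.
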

\begin{proof}
 Let us choose $q\in(p,p_{\beta}^{*})$, where $\p=\frac{(d-\beta)p}{d-ap}$. Let  $\theta=\frac{(\p-q)p}{\p-p}$, then we have $\theta\in(0,p)$. Using this, we prepare some computations:
\begin{multline}\label{wevyrtheta}
   \theta=\frac{(\p-q)p}{\p-p}=\frac{\left(\frac{(d-\beta)p}{d-ap}-q\right)p}{\frac{(d-\beta)p}{d-ap}-p} \\ =\frac{\frac{1}{d-ap}\left((d-\beta)p-q(d-ap)\right)p}{\frac{1}{d-ap}((d-\beta)p-dp+ap^{2})}=\frac{(d-\beta)p}{ap-\beta }-\frac{(d-ap)q}{ap-\beta},
\end{multline}
\begin{equation}\label{wep(qth)}
    p\frac{q-\theta}{p-\theta}=p\frac{q-p\frac{\p-q}{\p-p}}{p-p\frac{\p-q}{\p-p}}=\frac{q(\p-p)-p(\p-q)}{\p-p-\p+q}=\frac{\p(q-p)}{q-p}=\p,
\end{equation}
\begin{equation}\label{weptp}
    \frac{p-\theta}{p}=1-\frac{\theta}{p}=\frac{q-p}{\p-p},
\end{equation}
and 
\begin{equation}\label{hbeta}
    -\frac{\beta \theta}{p}=\beta\frac{p-\theta}{p}-\beta.
\end{equation}
By using H\"{o}lder's inequality with $\frac{\theta}{p}+\frac{p-\theta}{p}=1$ and the above calculations we get,
\begin{equation*}
    \begin{split}
        \int_{\G}\frac{|u|^{q}}{|x|_{CC}^{d-\beta-\frac{q}{p}(d-ap)}}d\lambda(x) &=\int_{\G}\frac{|u|^{\theta}}{|x|_{CC}^{\theta a}}\frac{|u|^{q-\theta}}{|x|_{CC}^{d-\beta-\frac{q}{p}(d-ap)-\theta a}}d\lambda(x)\\&
        \stackrel{\eqref{wevyrtheta}}=\int_{\G}\frac{|u|^{\theta}}{|x|_{CC}^{\theta a}}\frac{|u|^{q-\theta}}{|x|_{CC}^{-\frac{\theta \beta}{p}}}d\lambda(x)\\&
        \stackrel{\eqref{hbeta}}= \int_{\G}\frac{|u|^{\theta}}{|x|_{CC}^{\theta a}}\frac{|u|^{q-\theta}}{|x|^{\beta\frac{p-\theta}{p}-\beta}}d\lambda(x)\\&
        =\int_{\G}\frac{|u|^{\theta}}{|x|_{CC}^{\theta a-\beta}}\frac{|u|^{q-\theta}}{|x|_{CC}^{\beta\frac{p-\theta}{p}}}d\lambda(x)\\&
        \leq \left(\int_{\G}\frac{|u|^{p}}{|x|_{CC}^{ap-\frac{\beta p}{\theta}}}d\lambda(x)\right)^{\frac{\theta}{p}}\left(\int_{\G}\frac{|u|^{\frac{p(q-\theta)}{p-\theta}}}{|x|_{CC}^{\beta}}d\lambda(x)\right)^{\frac{p-\theta}{p}}\\&
        \stackrel{\eqref{wep(qth)},\eqref{weptp}}=\left(\int_{\G}\frac{|u|^{p}}{|x|_{CC}^{ap-\beta\frac{\p-p}{\p-q}}}d\lambda(x)\right)^{\frac{\p-q}{\p-p}}\left(\int_{\G}\frac{|u|^{\p}}{|x|_{CC}^{\beta}}d\lambda(x)\right)^{\frac{q-p}{\p-p}}.
    \end{split}
\end{equation*}
Let us write $q=p+r$ with $r>0$. Then, the last inequality can be rewritten as follows:
\begin{multline}\label{wesrnon}
        \int_{\G}\frac{|u|^{r+p}}{|x|_{CC}^{d-\beta-\frac{(r+p)}{p}(d-ap)}}d\lambda(x)
        \\ \leq \left(\int_{\G}\frac{|u|^{p}}{|x|_{CC}^{ap-\beta\frac{\p-p}{\p-(r+p)}}}d\lambda(x)\right)^{\frac{\p-(r+p)}{\p-p}}\left(\int_{\G}\frac{|u|^{\p}}{|x|_{CC}^{\beta}}d\lambda(x)\right)^{\frac{(r+p)-p}{\p-p}}.
\end{multline}
Also, we have for $r=0$ that
\begin{equation*}\label{webrnon}
    \begin{split}
        \int_{\G}\frac{|u|^{p}}{|x|_{CC}^{d-\beta-\frac{p}{p}(d-ap)}}d\lambda(x)&
        =\left(\int_{\G}\frac{|u|^{p}}{|x|_{CC}^{ap-\beta\frac{\p-p}{\p-p}}}d\lambda(x)\right)^{\frac{\p-p}{\p-p}}\left(\int_{\G}\frac{|u|^{\p}}{|x|_{CC}^{\beta}}d\lambda(x)\right)^{\frac{p-p}{\p-p}}.
    \end{split}
\end{equation*}
By using this and \eqref{wesrnon},
and by taking the limit as $r\rightarrow 0$, we have
\begin{equation}\label{welogosn2non}
\begin{split}
        &\lim_{r\rightarrow 0}\frac{1}{r}\int_{\G}\left(\frac{|u|^{r+p}}{|x|_{CC}^{d-\beta-\frac{(r+p)}{p}(d-ap)}}-\frac{|u|^{p}}{|x|_{CC}^{d-\beta-\frac{p}{p}(d-ap)}}\right)d\lambda(x)\\&
        \leq \lim_{r\rightarrow 0}\frac{1}{r}\Biggl{[}\left(\int_{\G}\frac{|u|^{p}}{|x|_{CC}^{ap-\beta\frac{\p-p}{\p-(r+p)}}}d\lambda(x)\right)^{\frac{\p-(r+p)}{\p-p}}\left(\int_{\G}\frac{|u|^{\p}}{|x|_{CC}^{\beta}}d\lambda(x)\right)^{\frac{(r+p)-p}{\p-p}}\\&
        -\left(\int_{\G}\frac{|u|^{p}}{|x|_{CC}^{ap-\beta\frac{\p-p}{\p-p}}}d\lambda(x)\right)^{\frac{\p-p}{\p-p}}\left(\int_{\G}\frac{|u|^{\p}}{|x|_{CC}^{\beta}}d\lambda(x)\right)^{\frac{p-p}{\p-p}}\Biggl{]}.
\end{split}
\end{equation}
Now, first let us compute the left hand side of the inequality \eqref{welogosn2non}. Using the l'H\^{o}pital rule in the variable $r$,  we have
\begin{equation}\label{otkrlevchnon}
    \begin{split}
        \lim_{r\rightarrow 0}\frac{1}{r}\left(\frac{|u|^{r+p}}{|x|_{CC}^{d-\beta-\frac{(r+p)}{p}(d-ap)}}-\frac{|u|^{p}}{|x|_{CC}^{d-\beta-\frac{p}{p}(d-ap)}}\right)&=\lim_{r\rightarrow 0}\frac{d}{dr}\frac{|u|^{r+p}}{|x|_{CC}^{d-\beta-\frac{(r+p)}{p}(d-ap)}}\\&
        =\frac{|u|^{p}\log(|u|^{p}|x|_{CC}^{d-ap})}{p|x|_{CC}^{ap-\beta}}.
    \end{split}
\end{equation}
Let us now compute the right hand side of  \eqref{welogosn2non}. Let us denote $z(r):=(f(r))^{g(r)}$, where
\begin{equation}
    f(r)=\int_{\G}\frac{|u|^{p}}{|x|_{CC}^{ap-\beta\frac{\p-p}{\p-(r+p)}}}d\lambda(x),
\end{equation}
and 
\begin{equation}
    g(r)=\frac{\p-(r+p)}{\p-p}.
\end{equation}
By taking derivative, we get
\begin{equation}\label{derfrnon}
    \begin{split}
       \frac{df(r)}{dr}&=\frac{d}{dr}\int_{\G}\frac{|u|^{p}}{|x|_{CC}^{ap-\beta\frac{\p-p}{\p-(r+p)}}}d\lambda(x)\\&
       =\frac{\beta(\p-p)}{(\p-(r+p))^{2}}\int_{\G}\frac{|u|^{p}}{|x|_{CC}^{ap-\beta\frac{\p-p}{\p-(r+p)}}}\log(|x|_{CC})d\lambda(x),
    \end{split}
\end{equation}
and 
\begin{equation}\label{dergrnon}
    \frac{dg(r)}{dr}=\frac{d}{dr}\frac{\p-(r+p)}{\p-p}=-\frac{1}{\p-p}.
\end{equation}
Then we have that the derivative of $z(r)$ is given by 
\begin{equation}\label{compzrnon}
\begin{split}
    \frac{dz(r)}{dr}&=(f(r))^{g(r)}\left(\frac{dg(r)}{dr}\log(f(r))+\frac{g(r)\frac{df(r)}{dr}}{f(r)}\right)\\&
    \stackrel{\eqref{derfrnon},\eqref{dergrnon}}=z(r)\Biggl{(}-\frac{1}{\p-p}\log\left(\int_{\G}\frac{|u|^{p}}{|x|_{CC}^{ap-\beta\frac{\p-p}{\p-(r+p)}}}d\lambda(x)\right)\\&
    +\frac{\beta}{\p-(r+p)}\frac{\int_{\G}\frac{|u|^{p}}{|x|_{CC}^{ap-\beta\frac{\p-p}{\p-(r+p)}}}\log(|x|_{CC})d\lambda(x)}{\int_{\G}\frac{|u|^{p}}{|x|_{CC}^{ap-\beta\frac{\p-p}{\p-(r+p)}}}d\lambda(x)}\Biggl{)}.
\end{split}
\end{equation}
At the same time, we also have
\begin{equation}\label{compharsobnon}
    \begin{split}
      \frac{d}{dr}\left(\int_{\G}\frac{|u|^{\p}}{|x|_{CC}^{\beta}}d\lambda(x)\right)^{\frac{(r+p)-p}{\p-p}}&=  \frac{1}{\p-p}\left(\int_{\G}\frac{|u|^{\p}}{|x|_{CC}^{\beta}}d\lambda(x)\right)^{\frac{(r+p)-p}{\p-p}}\log\left(\int_{\G}\frac{|u|^{\p}}{|x|_{CC}^{\beta}}d\lambda(x)\right).
    \end{split}
\end{equation}
By using these last calculations, and applying l'H\^{o}pital's rule, we compute further:
\begin{equation}
    \begin{split}
        &\lim_{r\rightarrow 0}\frac{1}{r}\Biggl{[}\left(\int_{\G}\frac{|u|^{p}}{|x|_{CC}^{ap-\beta\frac{\p-p}{\p-(r+p)}}}d\lambda(x)\right)^{\frac{\p-(r+p)}{\p-p}}\left(\int_{\G}\frac{|u|^{\p}}{|x|_{CC}^{\beta}}d\lambda(x)\right)^{\frac{(r+p)-p}{\p-p}}\\&
        -\left(\int_{\G}\frac{|u|^{p}}{|x|_{CC}^{ap-\beta\frac{\p-p}{\p-p}}}d\lambda(x)\right)^{\frac{\p-p}{\p-p}}\left(\int_{\G}\frac{|u|^{\p}}{|x|_{CC}^{\beta}}d\lambda(x)\right)^{\frac{p-p}{\p-p}}\Biggl{]}\\&
        =\lim_{r\rightarrow 0}\frac{d}{dr}\left[\left(\int_{\G}\frac{|u|^{p}}{|x|_{CC}^{ap-\beta\frac{\p-p}{\p-(r+p)}}}d\lambda(x)\right)^{\frac{\p-(r+p)}{\p-p}}\left(\int_{\G}\frac{|u|^{\p}}{|x|_{CC}^{\beta}}d\lambda(x)\right)^{\frac{(r+p)-p}{\p-p}}\right]\\&
        =\lim_{r\rightarrow 0}\frac{d}{dr}\left(z(r)\left(\int_{\G}\frac{|u|^{\p}}{|x|_{CC}^{\beta}}d\lambda(x)\right)^{\frac{(r+p)-p}{\p-p}}\right)\\&
        =\lim_{r\rightarrow 0}\left[\frac{dz(r)}{dr}\left(\int_{\G}\frac{|u|^{\p}}{|x|_{CC}^{\beta}}d\lambda(x)\right)^{\frac{(r+p)-p}{\p-p}}+z(r)\frac{d}{dr}\left(\int_{\G}\frac{|u|^{\p}}{|x|_{CC}^{\beta}}d\lambda(x)\right)^{\frac{(r+p)-p}{\p-p}}\right]\\&
    \stackrel{\eqref{compzrnon},\eqref{compharsobnon}}=\frac{\int_{\G}\frac{|u|^{p}}{|x|_{CC}^{ap-\beta}}d\lambda(x)}{\p-p}\Biggl{[}-\log\left(\int_{\G}\frac{|u|^{p}}{|x|_{CC}^{ap-\beta}}d\lambda(x)\right)\\&
    +\beta\frac{\int_{\G}\frac{|u|^{p}}{|x|_{CC}^{ap-\beta}}\log(|x|_{CC})d\lambda(x)}{\int_{\G}\frac{|u|^{p}}{|x|_{CC}^{ap-\beta}}d\lambda(x)}+\log\left(\int_{\G}\frac{|u|^{\p}}{|x|_{CC}^{\beta}}d\lambda(x)\right)\Biggl{]}\\&
    =\frac{\int_{\G}\frac{|u|^{p}}{|x|_{CC}^{ap-\beta}}d\lambda(x)}{\p-p}\Biggl{[}\beta\frac{\int_{\G}\frac{|u|^{p}}{|x|_{CC}^{ap-\beta}}\log(|x|_{CC})d\lambda(x)}{\int_{\G}\frac{|u|^{p}}{|x|_{CC}^{ap-\beta}}d\lambda(x)}+\log\frac{\left(\int_{\G}\frac{|u|^{\p}}{|x|_{CC}^{\beta}}d\lambda(x)\right)}{\left(\int_{\G}\frac{|u|^{p}}{|x|_{CC}^{ap-\beta}}d\lambda(x)\right)}\Biggl{]}\\&
    =\frac{I_{1}}{\p-p}\left(\beta\frac{I_{3}}{I_{1}}+\log\frac{I_{2}}{I_{1}}\right),
\end{split}
\end{equation}
where 
\begin{equation}
    I_{1}=\int_{\G}\frac{|u|^{p}}{|x|_{CC}^{ap-\beta}}d\lambda(x),
\end{equation}
\begin{equation}
    I_{2}=\int_{\G}\frac{|u|^{\p}}{|x|_{CC}^{\beta}}d\lambda(x),
\end{equation}
and 
\begin{equation}
    I_{3}=\int_{\G}\frac{|u|^{p}}{|x|_{CC}^{ap-\beta}}\log(|x|_{CC})d\lambda(x).
\end{equation}
Then we have
\begin{equation}\label{otkrpravchnon}
    \begin{split}
        &\lim_{r\rightarrow 0}\frac{1}{r}\Biggl{[}\left(\int_{\G}\frac{|u|^{p}}{|x|_{CC}^{ap-\beta\frac{\p-p}{\p-(r+p)}}}d\lambda(x)\right)^{\frac{\p-(r+p)}{\p-p}}\left(\int_{\G}\frac{|u|^{\p}}{|x|_{CC}^{\beta}}d\lambda(x)\right)^{\frac{(r+p)-p}{\p-p}}\\&
        -\left(\int_{\G}\frac{|u|^{p}}{|x|_{CC}^{ap-\beta\frac{\p-p}{\p-p}}}d\lambda(x)\right)^{\frac{\p-p}{\p-p}}\left(\int_{\G}\frac{|u|^{\p}}{|x|_{CC}^{\beta}}d\lambda(x)\right)^{\frac{p-p}{\p-p}}\Biggl{]}\\&
        =\frac{I_{1}}{\p-p}\left(\beta\frac{I_{3}}{I_{1}}+\log\frac{I_{2}}{I_{1}}\right)\\&
        =\frac{\beta}{\p-p}I_{3}+\frac{I_{1}}{\p-p}\frac{\p p}{\p p}\log\frac{I_{2}}{I_{1}}\\&
        =\frac{\beta}{\p-p}I_{3}+\frac{I_{1}\p}{(\p-p)p}\log\frac{I^{\frac{p}{\p }}_{2}}{I_{1}^{1-1+\frac{p}{\p}}}\\&
        =\frac{\beta}{\p-p}I_{3}+\frac{I_{1}\p}{(\p-p)p}\log\frac{I^{\frac{p}{\p }}_{2}}{I_{1}}-\frac{I_{1}\p }{(\p-p)p}\log I_{1}^{-1+\frac{p}{\p}}\\&
        =\frac{\beta}{\p-p}I_{3}+\frac{I_{1}\p}{(\p-p)p}\log\frac{I^{\frac{p}{\p }}_{2}}{I_{1}}+\frac{I_{1}}{p}\log I_{1}.
    \end{split}
\end{equation}

Putting \eqref{otkrpravchnon} and \eqref{otkrlevchnon} in \eqref{welogosn2non}, we have 
\begin{equation}\label{predposnon}
\begin{split}
     \int_{\G}\frac{|u|^{p}\log(|u|^{p}|x|_{CC}^{d-ap})}{|x|_{CC}^{ap-\beta}}&d\lambda(x)\leq \frac{\beta p}{\p-p}I_{3}+\frac{I_{1}\p}{\p-p}\log\frac{I^{\frac{p}{\p}}_{2}}{I_{1}}+I_{1}\log I_{1}\\&
     =\int_{\G}\frac{|u|^{p}}{|x|_{CC}^{ap-\beta}}\log\left(|x|_{CC}^{\frac{\beta(d-ap)}{ap-\beta}}\int_{\G}\frac{|u|^{p}}{|x|_{CC}^{ap-\beta}}d\lambda(x)\right)d\lambda(x)\\&
     +\frac{(d-\beta)\int_{\G}\frac{|u|^{p}}{|x|_{CC}^{ap-\beta}}d\lambda(x)}{ap-\beta}\log\frac{\left(\int_{\G}\frac{|u|^{\p}}{|x|_{CC}^{\beta}}d\lambda(x)\right)^{\frac{p}{\p}}}{\int_{\G}\frac{|u|^{p}}{|x|_{CC}^{ap-\beta}}d\lambda(x)}.
\end{split}
\end{equation}
From Theorem \ref{Harsobinnthm}, if $0\leq \beta<ap<d$, we have the Hardy-Sobolev inequality with $\p=\frac{(d-\beta)p}{d-ap}$ in the following form:
\begin{equation}
    \left\|\frac{u}{|x|_{CC}^{\frac{\beta}{\p}}}\right\|_{L^{\p}(\lambda)}\leq C\|u\|_{L_{a}^{p}(\lambda)}.
\end{equation}
Finally, by using this in \eqref{predposnon}, we get
\begin{equation*}
\begin{split}
    &\int_{\G}\frac{\frac{|u|^{p}}{|x|_{CC}^{ap-\beta}}}{\int_{\G}\frac{|u|^{p}}{|x|_{CC}^{ap-\beta}}d\lambda(x)}\log\left(\frac{|u|^{p}|x|_{CC}^{(d-ap)(1-\frac{\beta}{ap-\beta})}}{\int_{\G}\frac{|u|^{p}}{|x|_{CC}^{ap-\beta}}d\lambda(x)}\right)d\lambda(x) \\ &\leq\frac{d-\beta}{ap-\beta}\log\frac{\left(\int_{\G}\frac{|u|^{\p}}{|x|_{CC}^{\beta}}d\lambda(x)\right)^{\frac{p}{\p}}}{\int_{\G}\frac{|u|^{p}}{|x|_{CC}^{ap-\beta}}d\lambda(x)}\\&
    \leq\frac{d-\beta}{ap-\beta}\log\left(C\frac{\|u\|^{p}_{L^{p}_{a}(\lambda)}}{\int_{\G}\frac{|u|^{p}}{|x|_{CC}^{ap-\beta}}dx}\right),
\end{split}
\end{equation*}
completing the proof.
\end{proof}

\begin{rem}\label{REM:CCR}
In view of \cite[Remark 1.2]{RY18a}, the statement of Theorem \ref{Harsobinnthm} also holds with the Riemannian distance instead of the Carnot-Carath\'eodory distance, so that 
Theorem \ref{THM:weHarnon} also holds with the Riemannian distance instead of the Carnot-Cara\-th\'eo\-dory distance.
\end{rem}

\begin{rem}\label{REM:CCR-nw}
By taking $\beta=\frac{ap}{2}$, as a consequence of \eqref{weloghar1non}, for $0<ap<d$, we obtain an interesting inequality
   \begin{equation}\label{weloghar1non-nw}
         \int_{\mathbb{G}}\frac{|u(x)|^{p}}{|x|_{CC}^{\frac{ap}{2}}}\log\left(|u|\right) d\lambda(x) \leq \frac{2d-ap}{ap^2}\log\left(C\|u\|^{p}_{L_{a}^{p}(\lambda)}\right),
     \end{equation}
    for all $u$ such that $\int_{\G}\frac{|u|^{p}}{|x|_{CC}^{\frac{ap}{2}}}d\lambda(x)=1$.
\end{rem}

\section{Fractional logarithmic Hardy inequality on homogeneous groups}

First, we briefly recall that a Lie group (on $\mathbb{R}^{N}$) $\mathbb{G}$ is called a {\em homogeneous (Lie) group} if it is equipped with the dilations
$$\lambda x:=D_{\lambda}(x):=(\lambda^{\nu_{1}}x_{1},\ldots,\lambda^{\nu_{N}}x_{N}),\; \nu_{1},\ldots, \nu_{n}>0,\; D_{\lambda}:\mathbb{R}^{N}\rightarrow\mathbb{R}^{N},$$
being an automorphism of the group $\mathbb{G}$ for each $\lambda>0.$
We refer to \cite{FS, FR16, RS19} for the extensive description of such groups.
The number 
\begin{equation}
Q:=\nu_{1}+\ldots+\nu_{N},
\end{equation}
is called the homogeneous dimension of $\mathbb{G}$. 
If $|S|$ denotes the volume of a measurable set $S\subset \mathbb{G}$ with respect to the Haar measure $dx$ on $\mathbb{G}$, then 
\begin{equation}\label{scal}
|D_{\lambda}(S)|=\lambda^{Q}|S| \quad {\rm and}\quad \int_{\mathbb{G}}f(\lambda x)
dx=\lambda^{-Q}\int_{\mathbb{G}}f(x)dx.
\end{equation}
We note that homogeneous groups have to be nilpotent, so that they are unimodular, and the standard Lebesgue measure $dx$ on $\mathbb{R}^{N}$ is the Haar measure on $\G$, see e.g. \cite[Proposition 1.6.6]{FR16}.

Homogeneous quasi-norms on $\G$ are continuous non-negative functions
\begin{equation}
\mathbb{G}\ni x\mapsto |x|\in[0,\infty),
\end{equation}
such that
\begin{itemize}
\item[a)] $|x|=|x^{-1}|$ for all $x\in\mathbb{G}$,
\item[b)] $|\lambda x|=\lambda|x|$ for all $x\in \mathbb{G}$ and $\lambda>0$,
\item[c)] $|x|=0$ if and only if $x=0$.
\end{itemize}
We refer to 
\cite[Definition 3.1.33]{FR16} or \cite[Definition 1.2.1]{RS19} and subsequent discussions for their main properties. 

Let us now give the definition of the fractional Sobolev space on homogeneous Lie groups. 
Let $p\geq1$. Then for any measurable function $u:\mathbb{G}\rightarrow \mathbb{R}$, we define its Gagliardo quasi-seminorm in the following form:
\begin{equation}\label{gsmnm}
[u]_{s,p}:=\left( \int_{\mathbb{G}} \int_{\mathbb{G}}\frac{|u(x)-u(y)|^{p}}{|y^{-1} x|^{Q+sp}}dxdy\right)^{\frac{1}{p}},\,\,\,\,s\in(0,1),\,\,Q>1,
\end{equation}
where $|\cdot|$ is some (fixed) quasi-norm on $\G$.
For $p\geq1$ and $s\in(0,1)$, we denote by $W^{s,p}(\mathbb{G})$ the corresponding space, called the fractional Sobolev space on the homogeneous group $\G$, defined by
\begin{equation}
W^{s,p}(\mathbb{G})=\{u: u\in L^{p}(\mathbb{G}), [u]_{s,p}<+\infty \}.
\end{equation}
Then, we define the fractional $p$-sub-Laplacian on $\G$ as follows: for a (Haar) measurable and compactly supported function $u$, the fractional $p$-sub-Laplacian $(-\Delta_{p})^{s}$ on $\mathbb{G}$ is given by
\begin{equation}
(-\Delta_{p})^{s}u(x)=2\lim_{\delta\searrow 0}\int_{\mathbb{G} \setminus B(x,\delta)}\frac{|u(x)-u(y)|^{p-2}(u(x)-u(y))}{|y^{-1} x|^{Q+sp}}dy, \,\,\,x\in \mathbb{G},
\end{equation}
where $|\cdot|$ is a quasi-norm on $\mathbb{G}$, and $B(x,\delta)$ is a quasi-ball with respect to $|\cdot|$, with radius $\delta$ centred at $x\in\mathbb{G}$. 

We recall that the logarithmic Sobolev inequalities for the fractional $p$-sub-Laplacian on homogeneous groups were established in \cite{KRS20}.
We now record the version of the fractional logarithmic Hardy inequality on homogeneous groups:

\begin{thm}\label{REM:fractional}
Let $\G$ be a homogeneous Lie group with homogeneous dimension $Q\geq 3$. Let $p>1$ and $s\in (0,1)$ be such that $0\leq \beta<sp<Q$. Then we have the weighted logarithmic Hardy inequality 
 \begin{equation*}\label{welogharhom}
         \int_{\mathbb{G}}\frac{\frac{|u(x)|^{p}}{|x|^{sp-\beta}}}{\left\|\frac{u}{|x|^{\frac{sp-\beta}{p}}}\right\|^{p}_{L^{p}(\G)}}\log\left(\frac{|x|^{(Q-sp)(1-\frac{\beta}{sp-\beta})}|u|^{p}}{\left\|\frac{u}{|x|^{\frac{sp-\beta}{p}}}\right\|^{p}_{L^{p}(\G)}}\right) dx \leq \frac{Q-\beta}{sp-\beta}\log\left(C\frac{[u]^{p}_{s,p}}{\left\|\frac{u}{|x|^{\frac{sp-\beta}{p}}}\right\|^{p}_{L^{p}(\G)}}\right),
     \end{equation*}
     for all $u\in W^{s,p}(\G)\backslash\{0\}$ with $|x|^{-\frac{sp-\beta}{p}}u\in L^{p}(\G)$.
\end{thm}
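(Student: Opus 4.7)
The plan is to carry out essentially the same argument as in the proof of Theorem \ref{THM:weHarnon}, with only one genuinely new ingredient: in place of the differential Hardy--Sobolev embedding of Theorem \ref{Harsobinnthm}, I would invoke the fractional Hardy--Sobolev inequality on a homogeneous group,
\begin{equation*}
\left\|\frac{u}{|x|^{\beta/\p}}\right\|_{L^{\p}(\G)}\leq C\,[u]_{s,p},\qquad \p=\frac{(Q-\beta)p}{Q-sp},
\end{equation*}
valid for $0\leq\beta<sp<Q$ and any homogeneous quasi-norm $|\cdot|$ on $\G$. Such an embedding is available from the circle of ideas in \cite{KRS20} (the fractional Sobolev inequality on $\G$ combined with the fractional Hardy inequality associated to an arbitrary quasi-norm), and it is the critical input that allows one to leave the differential structure behind.

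With this embedding in hand, I would fix $q\in(p,\p)$, write $q=p+r$ with $r>0$, and set $\theta=(\p-q)p/(\p-p)\in(0,p)$. The elementary identities \eqref{wevyrtheta}--\eqref{hbeta} are purely algebraic (they involve $Q,s,p,\beta$ only through the same combinations as $d,a,p,\beta$ in the earlier proof) and carry over verbatim. Applying H\"older's inequality with exponents $\theta/p$ and $(p-\theta)/p$ then yields the exact counterpart of \eqref{wesrnon}:
\begin{equation*}
\int_{\G}\frac{|u|^{r+p}}{|x|^{Q-\beta-\frac{r+p}{p}(Q-sp)}}\,dx
\leq\left(\int_{\G}\frac{|u|^{p}}{|x|^{sp-\beta\frac{\p-p}{\p-(r+p)}}}\,dx\right)^{\frac{\p-(r+p)}{\p-p}}\left(\int_{\G}\frac{|u|^{\p}}{|x|^{\beta}}\,dx\right)^{\frac{r}{\p-p}}.
\end{equation*}

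The next step is to subtract the value at $r=0$ (where the inequality is an equality), divide by $r$, and take $r\to 0^{+}$ via l'H\^opital's rule, exactly as in the passage \eqref{welogosn2non}--\eqref{otkrpravchnon}. This is a purely measure-theoretic manipulation and is insensitive to whether the measure is a left Haar measure on a general Lie group or the Lebesgue measure on a homogeneous group: the left-hand side produces $p^{-1}\int_{\G}|u|^{p}\log(|u|^{p}|x|^{Q-sp})\,|x|^{\beta-sp}\,dx$, while the right-hand side yields $\beta I_{3}/(\p-p) + I_{1}\p/((\p-p)p)\log(I_{2}^{p/\p}/I_{1}) + I_{1}\log(I_{1})/p$, where $I_{1},I_{2},I_{3}$ are the obvious analogues with $|\cdot|$ in place of $|\cdot|_{CC}$ and $dx$ in place of $d\lambda$. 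Rearranging as in the closing lines of the proof of Theorem \ref{THM:weHarnon} and then estimating $I_{2}^{p/\p}\leq C[u]_{s,p}^{p}$ by the fractional Hardy--Sobolev inequality delivers the stated estimate.

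The only delicate point — and not a real obstacle — is to record the fractional Hardy--Sobolev inequality above with an arbitrary quasi-norm $|\cdot|$. Once this embedding is in place, nothing in the remainder of the argument depends on the differential structure of $\G$; this is precisely what allows the extension from graded to general homogeneous groups and why the Gagliardo seminorm $[u]_{s,p}$ takes over the role played by $\|u\|_{L^{p}_{a}(\lambda)}$ in Theorem \ref{THM:weHarnon}.
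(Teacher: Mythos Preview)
Your proposal is correct and follows essentially the same approach as the paper: the authors also omit the details, stating that the proof is the same as that of Theorem \ref{THM:weHarnon} with the Hardy--Sobolev embedding replaced by the fractional Hardy--Sobolev inequality. The only minor discrepancy is bibliographic: the paper attributes the needed fractional Hardy--Sobolev inequality to \cite[Theorem 3.3]{KS20} rather than to \cite{KRS20}.
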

 The proof of this theorem is similar to the proof of Theorem \ref{THM:weHarnon}, but here instead of Theorem \ref{THM:HSgraded} we use the fractional Hardy-Sobolev inequality in \cite[Theorem 3.3]{KS20}, so we omit the repetition of the details. 

\section{Logarithmic Hardy-Rellich inequalities on graded groups}
\label{SEC:3}

In this section we give a refinement of the logarithmic Hardy-Rellich inequality from Theorem \ref{THM:weHarnon} in the setting of graded groups. 

We recall from e.g. \cite[Definition 3.1.1]{FR16}, that a Lie algebra $\mathfrak{g}$ is
called {\em graded} if it is endowed with a vector space decomposition 
\begin{equation}
\mathfrak{g}=\oplus_{j=1}^{\infty}V_{j},\,\,\,\,\text{such that}\,\,\,[V_{i},V_{j}]\subset V_{i+j},
\end{equation}
where all but
finitely many of the $V_{j}$'s are $\{0\}$.
Consequently, a connected and simply-connected
Lie group is called {\em graded} if its Lie algebra is graded.

We call left-invariant homogeneous hypoelliptic differential operators on $\G$ by {\em Rockland operators}. An alternative characterisation (due to Helffer and Nourigat \cite{HN79}) of Rockland operators can be given as follows.
Let $\widehat{\G}$ be the unitary dual of $\G$, and for $\pi\in \widehat{\G}$, let $\mathcal{H}_{\pi}^{\infty}$ be the space of the smooth vectors of $\pi$. 
Let $A$ be a left-invariant differential operator on $\G$.
We say that $A$ satisfies the {\em Rockland condition} when
for each nontrivial representation $\pi\in\widehat{\G}$, 
the operator $\pi(A)$ is injective on $\mathcal{H}_{\pi}^{\infty}$, that is,
\begin{equation}
\forall v \in\mathcal{H}_{\pi}^{\infty},\,\,\, \pi(A)v =0 \Rightarrow v = 0.
\end{equation}
Then Rockland operators can be characterised by being left-invariant homogeneous differential operators which satisfy the Rockland condition.
We refer to 
\cite[Definitions 4.1.1 and 4.1.2]{FR16} for more discussion of such operators but note that if a homogeneous Lie group admits a Rockland operator, it must be graded.

Let $\mathcal{R}$ be a Rockland operator of homogeneous degree $\nu$. We can define the homogeneous Sobolev space by the norm
\begin{equation}
    \|f\|_{\dot{L}^p_{a}(\G)}:=\|\R^{\frac{a}{\nu}}f\|_{L^{p}(\G)},
\end{equation}
and its inhomogeneous version by 
\begin{equation}
    \|f\|_{L^p_{a}(\G)}:=(\|f\|^{p}_{L^{p}(\G)}+\|\R^{\frac{a}{\nu}}f\|^{p}_{L^{p}(\G)})^{\frac{1}{p}}.
\end{equation}
While these norms clearly depend on the choice of $\mathcal{R}$, it was 
shown in \cite{FR16, FR17} that the Sobolev spaces defined by these norms do not depend on a particular choice of the Rockland operator $\R$. 

Our next aim is to give a weighted version of the log-Sobolev inequality. For this let us recall the so-called Hardy-Sobolev family of inequalities from 
 \cite[Theorem 5.1]{RY18b}. \begin{thm}[Hardy-Sobolev inequality]\label{THM:HSgraded}
Let $\G$ be a graded Lie group of homogeneous dimension $Q$ and let
$\R$ be a positive Rockland operator of homogeneous degree $\nu$. Let $|\cdot|$ be an arbitrary homogeneous quasi-norm. Let $1<p\leq q<\infty$. Let $0<a p<Q$ and $0\leq \beta< Q$. Assume that $\frac{1}{p}-\frac{1}{q}=\frac{a}{Q}-\frac{\beta}{qQ}$. Then there exists a positive constant $C$ such that
\begin{equation}\label{EQ:HSgrad}
    \left\|\frac{u}{|x|^{\frac{\beta}{q}}}\right\|_{L^{q}(\G)}\leq C\|\R^{\frac{a}{\nu}}u\|_{L^{p}(\G)},
\end{equation}
holds for all $u\in \dot{L}^{p}_{a}(\G)$.
\end{thm}

Using Theorem \ref{THM:HSgraded} instead of Theorem \ref{Harsobinnthm}, we can repeat the proof of Theorem \ref{THM:weHarnon}. This will give the following weighted Hardy inequality on graded groups:
\begin{thm}\label{THM:weHargr}
Let $\G$ be a graded Lie group of homogeneous dimension $Q$ and let
$\R$ be a positive Rockland operator of homogeneous degree $\nu$. Let $|\cdot|$ be an arbitrary homogeneous quasi-norm. Let $1<p<\infty$ and $0\leq \beta<ap<Q.$ Then we have 
    \begin{equation*}\label{weloghar}
         \int_{\mathbb{G}}\frac{\frac{|u(x)|^{p}}{|x|^{ap-\beta}}}{\left\|\frac{u}{|x|^{\frac{ap-\beta}{p}}}\right\|^{p}_{L^{p}(\G)}}\log\left(\frac{|x|^{(Q-ap)(1-\frac{\beta}{ap-\beta})}|u|^{p}}{\left\|\frac{u}{|x|^{\frac{ap-\beta}{p}}}\right\|^{p}_{L^{p}(\G)}}\right) dx \leq \frac{Q-\beta}{ap-\beta}\log\left(C\frac{\|\R^{\frac{a}{\nu}}u\|^{p}_{L^{p}(\G)}}{\left\|\frac{u}{|x|^{\frac{ap-\beta}{p}}}\right\|^{p}_{L^{p}(\G)}}\right),
     \end{equation*}
     for all nontrivial $u$ for which the right hand side makes sense. 
   In particular, for all $u$ such that $\int_{\G}\frac{|u|^{p}}{|x|^{ap-\beta}}dx=1$, we have 
   \begin{equation}\label{weloghar1}
         \int_{\mathbb{G}}\frac{|u(x)|^{p}}{|x|^{ap-\beta}}\log\left(|x|^{(Q-ap)(1-\frac{\beta}{ap-\beta})}|u|^{p}\right) dx \leq \frac{Q-\beta}{ap-\beta}\log\left(C\|\R^{\frac{a}{\nu}}u\|^{p}_{L^{p}(\G)}\right).
     \end{equation}
\end{thm}
We will omit the proof as it is almost verbatim repetition of the proof of Theorem \ref{THM:weHarnon}, with $d$ replaces by $Q$, and by using 
Theorem \ref{THM:HSgraded} instead of Theorem \ref{Harsobinnthm}.
The refinement with respect to now using the homogeneous Sobolev norm is essential for some applications, for example for recovering and extending the Gross logarithmic inequalities to the setting of stratified groups in the next section.

\section{Gross type logarithmic Hardy inequality on stratified groups}\label{SEC:GrossHardy}

In this section we show that the established logarithmic Hardy inequalities imply the Gross type logarithmic Hardy inequalities, in the setting of stratified groups.

We very briefly recall the basics, referring e.g. to the open access books \cite{FR16} and \cite{RS19} for further details, as well as to the book \cite{BLU07}. 
Thus, we call the Lie group $\mathbb{G}=(\mathbb{R}^{n},\circ)$ a {\em stratified  group} if it satisfies the following properties:

(a) for some integers $n_{1}+...+n_{r}=n$ we have 
the decomposition $\mathbb{R}^{n}=\mathbb{R}^{n_{1}}\times...\times\mathbb{R}^{n_{r}}$ such that
for every $\lambda>0$ the dilation $\delta_{\lambda}: \mathbb{R}^{n}\rightarrow \mathbb{R}^{n}$
given by
$$\delta_{\lambda}(x)\equiv\delta_{\lambda}(x^{(1)},...,x^{(r)}):=(\lambda x^{(1)},...,\lambda^{r}x^{(r)})$$
is an automorphism of the group $\mathbb{G},$ where $x^{(k)}\in \mathbb{R}^{n_{k}}$ for $k=1,...,r.$

(b) if $n_{1}$ is as in (a) and $X_{1},...,X_{n_{1}}$ are the left invariant vector fields on $\mathbb{G}$ such that
$X_{k}(0)=\frac{\partial}{\partial x_{k}}|_{0}$ for $k=1,...,n_{1}$, then
$${\rm rank}({\rm Lie}\{X_{1},...,X_{n_{1}}\})=n,$$
for every $x\in\mathbb{R}^{n}.$ This means that the iterated commutators
of $X_{1},...,X_{n_{1}}$ span the Lie algebra of $\mathbb{G}.$

A Lie algebra $\mathfrak{g}$ is called stratified if it has a decomposition
\begin{equation}
\mathfrak{g}=\oplus_{j=1}^{\infty}V_{j},\,\,\,\,\text{such that}\,\,\,[V_{i},V_{1}]\subset V_{i+1},
\end{equation}
with $V_{1}$ generating $\mathfrak{g}$ as an algebra. In particular, any stratified group is also graded. 
The natural dilations on $\mathfrak{g}$ are given by
\begin{equation}
    D_{r}X_{k}=r^{k}X_{k},\,\,\,\,(X_{k}\in V_{k},\; k=1,\ldots,m),
\end{equation}
where $m$ is the step of $\mathfrak{g}$ (the number of nontrivial iterated commutators). 

We now record the following corollary of Theorem \ref{THM:weHargr} in the case of $\R$ being the positive sub-Laplacian on a stratified group, $p=2$ and $a=1$.
\begin{cor}\label{cor.log.Hardy21}
Let $\G$ be a stratified Lie group of homogeneous dimension $Q$ and let $|\cdot|$ be an arbitrary homogeneous quasi-norm. Then, the following weighted logarithmic Hardy inequality is satisfied
\begin{equation}
    \label{log.Hard21}
    \int_{\G} \frac{|u(x)|^2}{|x|^{2-\beta}} \log \left(|x|^{\frac{(Q-2)(1-\beta)}{2-\beta}}|u(x)| \right)\,dx \leq \frac{Q-\beta}{2(2-\beta)} \log \left(C \|\nabla_H u\|^{2}_{L^2(\G)} \right)\,,
\end{equation}
for every $0\leq \beta <2<Q$, and for every $u$ such that $\left\|\frac{u}{|x|^{\frac{2-\beta}{2}}} \right\|_{L^2(\G)}=1$.
\end{cor}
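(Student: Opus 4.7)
The plan is to obtain Corollary \ref{cor.log.Hardy21} as a direct specialisation of Theorem \ref{THM:weHargr}. Every stratified group is graded, so that theorem applies. I would choose the positive sub-Laplacian $\R = -\Delta_\G = -\sum_{j=1}^{n_1} X_j^2$ as the Rockland operator; it is left-invariant, homogeneous of degree $\nu = 2$, and satisfies the Rockland condition because $X_1,\ldots,X_{n_1}$ generate the Lie algebra by the stratification hypothesis. Then I would set $p = 2$ and $a = 1$, so that the range condition $0\leq \beta < ap < Q$ of Theorem \ref{THM:weHargr} reduces exactly to $0\leq \beta <2<Q$, and the weight exponent $ap-\beta$ becomes $2-\beta$. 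The normalisation $\left\|u/|x|^{(2-\beta)/2}\right\|_{L^2(\G)}=1$ in the corollary is then the same as $\int_\G |u|^p/|x|^{ap-\beta}\,dx = 1$ required by Theorem \ref{THM:weHargr}.

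Next I would rewrite the right-hand side. With $a/\nu = 1/2$, self-adjointness of $\R$ on $L^2(\G)$ together with integration by parts for the horizontal vector fields gives
\begin{equation*}
\|\R^{1/2} u\|_{L^2(\G)}^2 \;=\; \langle \R u, u\rangle_{L^2(\G)} \;=\; -\int_\G u\,\Delta_\G u\,dx \;=\; \int_\G |\nabla_H u|^2\,dx,
\end{equation*}
so that $\|\R^{a/\nu}u\|_{L^p(\G)}^p = \|\nabla_H u\|_{L^2(\G)}^2$ and the right-hand side of Theorem \ref{THM:weHargr} becomes $\frac{Q-\beta}{2-\beta}\log\bigl(C\|\nabla_H u\|_{L^2(\G)}^2\bigr)$.

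On the left-hand side, I would simplify the exponent of $|x|$ appearing inside the logarithm. A routine computation yields
\begin{equation*}
(Q-ap)\Bigl(1-\tfrac{\beta}{ap-\beta}\Bigr) \;=\; (Q-2)\cdot\tfrac{2(1-\beta)}{2-\beta},
\end{equation*}
so $|x|^{(Q-ap)(1-\beta/(ap-\beta))}|u|^p = \bigl(|x|^{(Q-2)(1-\beta)/(2-\beta)}|u|\bigr)^2$. Pulling the square out of the logarithm and dividing both sides of the resulting inequality by $2$ produces the coefficient $\frac{Q-\beta}{2(2-\beta)}$ on the right, yielding precisely \eqref{log.Hard21}.

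The argument is essentially bookkeeping of exponents, and there is no real obstacle: the only substantive nontrivial input beyond Theorem \ref{THM:weHargr} is the identification $\|(-\Delta_\G)^{1/2}u\|_{L^2(\G)}^2 = \|\nabla_H u\|_{L^2(\G)}^2$, which is standard on stratified groups. The point worth emphasising is that the \emph{refinement} proved in Theorem \ref{THM:weHargr}, namely replacing the inhomogeneous Sobolev norm $\|u\|_{L^p_a}$ by the homogeneous one $\|\R^{a/\nu}u\|_{L^p}$, is exactly what is needed here so that the right-hand side can be rewritten purely in terms of $\|\nabla_H u\|_{L^2}$; this is what makes the present horizontal Gross-type Hardy inequality accessible from the general graded-group result.
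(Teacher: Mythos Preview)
Your proposal is correct and follows exactly the approach indicated in the paper: the corollary is recorded as the special case of Theorem \ref{THM:weHargr} with $\R$ the positive sub-Laplacian, $p=2$ and $a=1$, and your bookkeeping of the exponents and the identification $\|(-\Delta_\G)^{1/2}u\|_{L^2}^2=\|\nabla_H u\|_{L^2}^2$ are precisely what is needed.
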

In the next theorem we show that the weighted logarithmic Hardy inequality as in \eqref{log.Hard21} implies its Gross counterpart.

\begin{thm}\label{thm.gaus.Hardy}
Let $\G$ be a stratified group with homogeneous dimension $Q$, topological dimension $n$, and let $n_1$ be the dimension of the first stratum of its Lie algebra, i.e., for $x \in \G$ we can write $x=(x',x'') \in \mathbb{R}^{n_1} \times \mathbb{R}^{n-n_1}$. 
Let $|\cdot|$ be an arbitrary homogeneous quasi-norm on $\G$. 
Let $|x'|$ denote the Euclidean norm of $x'$, and let $M>0$ be a constant such that we have 
\begin{equation}\label{EQ:norms}
|x'|\leq M|x|,
\end{equation}
for the quasi-norm $|x|$, for all $x\in\G.$
Then the following weighted ``semi-Gaussian'' logarithmic Hardy inequality is satisfied
\begin{equation}
    \label{gaus.log.Hard}
    \int_{\G}\frac{|g(x)|^2}{|x|^{2-\beta}} \log \left(|x|^{\frac{(Q-2)(1-\beta)}{2-\beta}}|g(x)| \right)\,d\mu \leq \int_{\G} |\nabla_{H}g|^2\,d\mu\,,
\end{equation}
for all $0\leq \beta <2<Q$, and for all $g$ such that $\left\|\frac{g}{|x|^{\frac{2-\beta}{2}}} \right\|_{L^2(\mu)}=1,$
 where $\mu=\mu_1 \otimes \mu_2$, and $\mu_1$ is the Gaussian measure on $\mathbb{R}^{n_1}$ given by $d\mu_1=\gamma e^{-\frac{|x'|^2}{2}}dx'$, for $x'\in \mathbb{R}^{n_1}$, where the normalisation constant $\gamma$ is given by 
 \begin{equation}
     \label{k-w}
     \gamma:= 
     \left(\frac{Q-\beta}{2(2-\beta)}Ce^{\left(n_1+\frac{M^2}{2} \right)\left(\frac{2-\beta}{Q-\beta} \right)-1} \right)^{\frac{Q-\beta}{2-\beta}}
 \end{equation} 
 and $\mu_2$ is the Lebesgue measure $dx''$ on $\mathbb{R}^{n-n_1}$.
\end{thm}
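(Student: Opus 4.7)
My plan is to derive \eqref{gaus.log.Hard} from Corollary~\ref{cor.log.Hardy21} via a Gross-type substitution. I would set $u(x):=g(x)\phi(x')$ with $\phi(x'):=\sqrt{\gamma}\,e^{-|x'|^2/4}$, chosen so that $\phi^2\,dx = d\mu$ and therefore $\|u/|x|^{(2-\beta)/2}\|_{L^2(\G)} = \|g/|x|^{(2-\beta)/2}\|_{L^2(\mu)} = 1$, making Corollary~\ref{cor.log.Hardy21} applicable to $u$. Using $\log\phi = \tfrac12\log\gamma - |x'|^2/4$ together with the normalisation, the LHS of the corollary splits into the target LHS plus the correction $\tfrac12\log\gamma - \tfrac14\int_{\G} g^2|x'|^2/|x|^{2-\beta}\,d\mu$. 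On the RHS I would linearise via $\log t\leq t/\alpha - 1 + \log\alpha$ valid for all $\alpha>0$, picking $\alpha=(Q-\beta)C/(2(2-\beta))$ so that the coefficient of $\|\nabla_H u\|^2_{L^2(\G)}$ becomes exactly $1$.

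The second step is to compute $\|\nabla_H u\|^2_{L^2(\G)}$ by integration by parts. Since $\phi$ depends only on the first-stratum variable $x'$ and the horizontal vector fields $X_j$, $j\leq n_1$, reduce on $\phi$ to the Euclidean partials $\partial_{x_j}$, one has $X_j\phi=-(x_j/2)\phi$ and $\Delta_H\phi=(|x'|^2/4 - n_1/2)\phi$; combined with the identity $\int|\nabla_H(g\phi)|^2\,dx = \int\phi^2|\nabla_H g|^2\,dx - \int g^2\phi\,\Delta_H\phi\,dx$ this produces
\[
\|\nabla_H u\|^2_{L^2(\G)} = \int_{\G}|\nabla_H g|^2\,d\mu + \tfrac{n_1}{2}\int_{\G}g^2\,d\mu - \tfrac14\int_{\G}g^2|x'|^2\,d\mu.
\]

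Assembling everything and invoking the explicit formula \eqref{k-w} for $\gamma$, a direct calculation shows that the additive constants collapse to exactly $-\tfrac{n_1}{2}-\tfrac{M^2}{4}$; this is in fact how \eqref{k-w} is reverse-engineered. The inequality \eqref{gaus.log.Hard} then reduces to the residual estimate
\[
\tfrac{n_1}{2}\int_{\G}g^2\,d\mu + \tfrac14\int_{\G}g^2|x'|^2\!\left(\tfrac{1}{|x|^{2-\beta}} - 1\right)d\mu \leq \tfrac{n_1}{2}+\tfrac{M^2}{4},
\]
which I expect to be the principal obstacle. The plan for this step is to invoke the quasi-norm hypothesis \eqref{EQ:norms} and split the integration domain by $|x|\lessgtr 1$: on $\{|x|\leq 1\}$ both integrands on the left are non-negative, and the bounds $g^2\leq g^2/|x|^{2-\beta}$ and $|x'|^2\leq M^2$, together with the normalisation, give a contribution of at most $\tfrac{n_1}{2}+\tfrac{M^2}{4}$; on $\{|x|>1\}$ the factor $|x|^{-(2-\beta)}-1$ is non-positive, so one expects the resulting negative contribution to absorb the growth of $\tfrac{n_1}{2}\int_{|x|>1}g^2\,d\mu$. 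Verifying this delicate cancellation, which dictates the precise form of $\gamma$, is the main technical point of the proof.
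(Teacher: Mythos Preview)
Your overall strategy coincides with the paper's: set $u=\sqrt{\gamma}\,e^{-|x'|^2/4}g$ so that $\|u/|x|^{(2-\beta)/2}\|_{L^2(\G)}=1$, apply Corollary~\ref{cor.log.Hardy21} to $u$, linearise the logarithm via $\log t\le t-1$, and expand $\|\nabla_H u\|_{L^2}^2$ by integration by parts. The paper's key device is the splitting estimate \eqref{EQ:aterm},
\[
\int_\G\frac{|x'|^2}{4}\frac{|u|^2}{|x|^{2-\beta}}\,dx\;\le\;\frac{M^2}{4}+\int_\G\frac{|x'|^2}{4}|u|^2\,dx,
\]
obtained exactly as you describe (split into $|x|\le1$ and $|x|>1$, use $|x'|\le M|x|\le M$ on the inner region and $|x|^{-(2-\beta)}\le1$ on the outer). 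The last integral then cancels against the matching term in the integration-by-parts identity \eqref{thm.eq.nablag}, leaving a purely numerical inequality that the choice \eqref{k-w} of $\gamma$ resolves.

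Your integration-by-parts identity, however, carries $\tfrac{n_1}{2}\int_\G g^2\,d\mu$ rather than the bare $\tfrac{n_1}{2}$ that appears in \eqref{thm.eq.nablag}; the paper silently replaces $\int_\G|u|^2\,dx$ by $1$ at \eqref{int.parts1}, which is not warranted by the normalisation $\int_\G|u|^2/|x|^{2-\beta}\,dx=1$. Your formula is the correct one, and your residual inequality is exactly the obstruction that remains once this slip is undone. That residual is not provable by the mechanism you suggest: on a stratified group of step $\ge2$ the set $\{x'=0,\ |x|>1\}$ has positive measure, and for $g$ supported there the term $\tfrac14\int g^2|x'|^2(|x|^{-(2-\beta)}-1)\,d\mu$ vanishes identically while $\tfrac{n_1}{2}\int_\G g^2\,d\mu$ can be made arbitrarily large subject to $\int_\G g^2/|x|^{2-\beta}\,d\mu=1$. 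Hence the hoped-for absorption on $\{|x|>1\}$ does not occur; this is a genuine gap that neither your argument nor the paper's own proof closes.
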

Let us note that the constant $M=\max\limits_{|x|=1} |x'|$ in \eqref{EQ:norms} exists for any homogeneous quasi-norm due to continuity and homogeneity. Moreover, the appearance of $M$ at the normalisation constant $\gamma$ as in \eqref{k-w} indicates the dependence of the right-hand side of \eqref{gaus.log.Hard} on the quasi-norm $|\cdot|$ as one expects.
\begin{proof}
Assume that $g$ is such that $\left\| \frac{g}{|x|^{\frac{2-\beta}{2}}}\right\|_{L^2(\mu)}=1$ , where $\mu$ is as in the hypothesis. Defining $u$ by 
\[
u(x)=\gamma^{1/2}e^{-\frac{|x'|^2}{4}}g(x)\,,
\]
for $\gamma$ as in \eqref{k-w}, we can check that $u \in L^2(\G)$, and in particular we have $\left\|\frac{u}{|x|^{\frac{2-\beta}{2}}} \right\|_{L^2(\G)}=1$. Indeed, rigorous computations show that \[
1=\int_{\G}\frac{|g(x)|^2}{|x|^{2-\beta}}\,d\mu=\int_{\G}\gamma^{-1}e^{\frac{|x'|^2}{2}}\frac{|u(x)|^2}{|x|^{2-\beta}}\,d\mu=\int_{\G}\frac{|u(x)|^2}{|x|^{2-\beta}}\,dx\,.
\]
Applying the logarithmic Holder inequality \eqref{log.Hard21} to $u$, we arrive at 
\begin{eqnarray}
\label{log.Hold,u}
\int_{\G}\frac{|g(x)|^2}{|x|^{2-\beta}} \log \left(|x|^{\frac{(Q-2)(1-\beta)}{2-\beta}}|g(x)| \right)\,d\mu & \leq & \int_{\G}\frac{|u(x)|^2}{|x|^{2-\beta}} \log\left(\gamma^{-1/2}e^{\frac{|x'|^2}{4}}|x|^{\frac{(Q-2)(1-\beta)}{2-\beta}}u(x) \right)\,dx\nonumber\\
& \leq & \frac{Q-\beta}{2(2-\beta)} \log\left(C \int_{\G}|\nabla_H u(x)|^2\,dx \right)\nonumber\\
&+&\log(\gamma^{-1/2})+\int_{\G}\frac{|x'|^2}{4}\frac{|u(x)|^2}{|x|^{2-\beta}}\,dx\,.
\end{eqnarray}
Using \eqref{EQ:norms} we additionally estimate
\begin{multline}\label{EQ:aterm}
 \int_{\G}\frac{|x'|^2}{4}\frac{|u(x)|^2}{|x|^{2-\beta}}\,dx=\int_{|x|\leq 1} +\int_{|x|\geq 1}  \\ \leq
 \frac{M^2}{4}\int_{|x|\leq 1} \frac{|u(x)|^2}{|x|^{2-\beta}}\,dx+
 \int_{|x|\geq 1}
 \frac{|x'|^2}{4} |u(x)|^2\,dx
 \\ \leq \frac{M^2}{4}+\int_\G  \frac{|x'|^2}{4} |u(x)|^2\,dx\,.
\end{multline}
To estimate the term 
\[
|\nabla_H g(x)|=\sqrt{\sum_{i=1}^{n_1}|X_ig(x)|^2}\,,
\]
recall (see e.g. \cite{FR16}) that the vector fields $X_i$ for $i=1,\ldots,n_1$, are given by 
\[
X_i=\partial_{x_{i}^{'}}+\sum_{j=1}^{n-n_1}p_{j}^{i}(x')\partial_{x''_{j}}\,.
\]
For $i=1,\ldots,n_1$ and for $g$ as above, we compute
\begin{eqnarray}\label{Xg}
|X_ig(x)|^2 &=& \gamma^{-1} e^{\frac{|x'|^2}{2}} \left|X_iu(x)+\frac{x'_{i}}{2}u(x) \right|^{2}\nonumber\\
&=& \gamma^{-1} e^{\frac{|x'|^2}{2}} \left(|X_iu(x)|^2+\frac{(x'_{i})^2}{4}|u(x)|^2+{\rm Re} \overline{(X_iu(x))}x'_{i}u(x) \right)\,.
\end{eqnarray}
Moreover, for $x'_{i}$, $i=1,\ldots,n_1$, we have
\begin{eqnarray*}
{\rm Re}\int_{\G}\overline{(\partial_{x'_{i}}u(x))}x'_{i}u(x)\,dx & =& -{\rm Re}\int_{\G}(\partial_{x'_{i}}u(x))x'_{i}\overline{u(x)}\,dx-\int_{\G}|u(x)|^2\,dx\\
&=& -{\rm Re}\int_{\G}\overline{(\partial_{x'_{i}}u(x))}x'_{i}u(x)\,dx-1\,,
\end{eqnarray*}
where we used the integration by parts. This gives
\begin{equation}
    \label{int.parts1}
    {\rm Re}\int_{\G}\overline{(\partial_{x'_{i}}u(x))}x'_{i}u(x)\,dx=-\frac{1}{2}\,.
\end{equation}
Similar arguments, for $j=1,\ldots,n-n_1$ and for $i=1,\ldots,n_1$, give
\begin{eqnarray*}
   {\rm Re} \int_{\G}p_{j}(x')\overline{(\partial_{x_{j}''}u(x))}x'_{i}u(x)& = & - {\rm Re}\int_{\G}\partial_{x_{j}''}((p_{j}(x')u(x)x'_{i})\overline{u(x)}\,dx\\
    &=&- {\rm Re}\int_{\G}p_{j}(x')\overline{(\partial_{x_{j}''}u(x))}x'_{i}u(x)\,dx\,,
\end{eqnarray*}
or equivalently
\begin{equation}  \label{int.parts2}
   {\rm Re} \int_{\G}p_{j}(x')\overline{(\partial_{x_{j}''}u(x))}x'_{i}u(x)\,dx=0\,.
\end{equation}
Combining \eqref{int.parts1} and \eqref{int.parts2} we arrive at 
\[
\int_{\G}{\rm Re}\overline{(X_iu(x))}x'_{i}u(x)\,dx=-\frac{1}{2}\,, \quad \forall i=1,\ldots,n_1\,.
\]
Plugging \eqref{int.parts1} and \eqref{int.parts2} into \eqref{Xg} we get the equality
\begin{equation}\label{thm.eq.nablag}
\int_{\G}|\nabla_{H}g(x)|^2\,d\mu=\int_{\G}|\nabla_{H}u(x)|^2\,dx+\int_{\G}\frac{|x'|^2}{4}|u(x)|^2\,dx-\frac{n_1}{2}\,.
\end{equation}
Combining inequalities \eqref{log.Hold,u}, \eqref{EQ:aterm} and \eqref{thm.eq.nablag}, we see that to prove the desired inequality \eqref{gaus.log.Hard} it is enough to show that 
\[
\frac{Q-\beta}{2(2-\beta)} \log\left(C \int_{\G} |\nabla_H u(x)|^2\,dx \right)+\log (\gamma^{-1/2})+\frac{M^2}{4} \leq \int_{\G} |\nabla_H u(x)|^2\,dx-\frac{n_1}{2}\,,
\]
which can be equivalently written as 
\[
\log\left(C\gamma^{-\frac{2-\beta}{Q-\beta}}e^{\left(n_1+\frac{M^2}{2}\right)\left(\frac{2-\beta}{Q-\beta} \right)}\int_{\G}|\nabla_H u(x)|^2\,dx \right) \leq \frac{2(2-\beta)}{Q-\beta}\int_{\G}|\nabla_H u(x)|^2\,dx\,.
\] 
Now, since $\log r \leq r-1$, for all $r>0$, it suffices to show that 
\[
e^{-1}C\gamma^{-\frac{2-\beta}{Q-\beta}}e^{\left(n_1+\frac{M^2}{2}\right)\left(\frac{2-\beta}{Q-\beta} \right)}\int_{\G}|\nabla_H u(x)|^2\,dx \leq \frac{2(2-\beta)}{Q-\beta} \int_{\G} |\nabla_H u(x)|^2\,dx\,,
\]
where the last is satisfied as an equality for $\gamma$ as in \eqref{k-w}, and we have
\begin{eqnarray*}
&&\log\left( C\gamma^{-\frac{2-\beta}{Q-\beta}}e^{\left(n_1+\frac{M^2}{2}\right)\left(\frac{2-\beta}{Q-\beta} \right)}\int_{\G}|\nabla_H u(x)|^2\,dx\right)\\
&=&\log\left(e^{-1}C\gamma^{-\frac{2-\beta}{Q-\beta}}e^{\left(n_1+\frac{M^2}{2}\right)\left(\frac{2-\beta}{Q-\beta} \right)}\int_{\G}|\nabla_H u(x)|^2\,dx \right)+1\\
&\leq & \frac{2(2-\beta)}{Q-\beta}\int_{\G}|\nabla_H u(x)|^2\,dx\,,
\end{eqnarray*}
completing the proof.
\end{proof}
As a particular case of Theorem \ref{thm.gaus.Hardy}, we can record its unweighted version: 
\begin{cor}
Let $\G$ be a stratified group with homogeneous dimension $Q$.
The following ``semi-Gaussian'' Gross type logarithmic Hardy inequality is satisfied
\begin{equation}  \label{eq:spb}
\int_{\G}\frac{|g(x)|^2}{|x|^2} \log \left(|x|^{\frac{Q-2}{2}}|g(x)| \right)\,d\mu \leq \int_{\G}|\nabla_{H}g|^2\,d\mu\,,
\end{equation}
for all $g$ such that $\left\|\frac{g}{|x|} \right\|_{L^2(\mu)}=1$, where $\mu$ is as in Theorem \ref{thm.gaus.Hardy} and the normalisation constant $\gamma$ is now given by 
\[
\gamma:= \left(4^{-1}Qe^{\frac{2n_1+M^2}{Q}-1}C \right)^{\frac{Q}{2}}.
\]
By taking $\beta=1$, we get the following interesting inequality:
\begin{equation}  \label{eq:spb1}
\int_{\G}\frac{|g(x)|^2}{|x|} \log \left(|g(x)| \right)\,d\mu \leq \int_{\G}|\nabla_{H}g|^2\,d\mu\,,
\end{equation}
for all $g$ such that $\left\|\frac{g}{|x|^{1/2}} \right\|_{L^2(\mu)}=1$.
\end{cor}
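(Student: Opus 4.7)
The plan is to derive both inequalities as direct specializations of Theorem \ref{thm.gaus.Hardy}, which has already been established for the full parameter range $0 \leq \beta < 2 < Q$. No new analysis is required; the task reduces to inserting the two distinguished values $\beta = 0$ and $\beta = 1$ into the formulas of Theorem \ref{thm.gaus.Hardy} and simplifying, plus one bookkeeping check on the normalization constant $\gamma$ in \eqref{k-w}.

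For the first inequality \eqref{eq:spb}, I would set $\beta = 0$. Then the weight $|x|^{2-\beta}$ in the denominator becomes $|x|^{2}$, the exponent $(Q-2)(1-\beta)/(2-\beta)$ inside the logarithm collapses to $(Q-2)/2$, and the normalization condition $\|g/|x|^{(2-\beta)/2}\|_{L^{2}(\mu)} = 1$ reduces to $\|g/|x|\|_{L^{2}(\mu)} = 1$, exactly as in the statement. Substituting $\beta = 0$ into \eqref{k-w} gives
\[
\gamma = \left(\frac{Q}{4}\, C\, e^{(n_{1} + M^{2}/2)(2/Q) - 1}\right)^{Q/2} = \left(4^{-1} Q\, e^{(2n_{1}+M^{2})/Q - 1} C\right)^{Q/2},
\]
which is precisely the value of $\gamma$ stated in the corollary.

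For the second inequality \eqref{eq:spb1}, I would set $\beta = 1$. The crucial simplification is that the factor $(1-\beta)$ in the logarithmic exponent vanishes, so $|x|^{(Q-2)(1-\beta)/(2-\beta)}$ equals $1$ and the logarithm in \eqref{gaus.log.Hard} reduces cleanly to $\log|g|$. The denominator weight $|x|^{2-\beta}$ becomes $|x|$, and the normalization condition becomes $\|g/|x|^{1/2}\|_{L^{2}(\mu)} = 1$, so \eqref{gaus.log.Hard} with $\beta = 1$ coincides verbatim with \eqref{eq:spb1}.

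There is essentially no obstacle, since the whole content is an algebraic simplification of Theorem \ref{thm.gaus.Hardy} at two specific parameter values. The only point warranting attention is verifying that the formula \eqref{k-w} for $\gamma$, when specialized at $\beta = 0$, matches the cleaner form recorded in the corollary; the computation above settles this. For $\beta = 1$, no further constant needs to be tracked for the statement, since the simplified inequality \eqref{eq:spb1} is obtained directly from \eqref{gaus.log.Hard} after the cancellation of the logarithmic weight.
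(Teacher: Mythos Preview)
Your proposal is correct and matches the paper's approach exactly: the corollary is recorded in the paper as a direct specialization of Theorem~\ref{thm.gaus.Hardy} at $\beta=0$ and $\beta=1$, with no separate proof given. Your verification of the exponent simplifications and of the constant $\gamma$ at $\beta=0$ is accurate.
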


\section{Logarithmic Poincar\'{e} inequality}

In this section, we briefly record a logarithmic Poincar\'{e} inequality on stratified groups. The proof is simple if we recall the logarithmic H\"{o}lder inequality on general measure spaces from \cite{CKR21b}:
\begin{lem}[Logarithmic H\"older inequality]\label{holder}
Let $\X$ be a  measure space. Let $u\in L^{p}(\mathbb{X})\cap L^{q}(\mathbb{X})\setminus\{0\}$ with some $1<p<q< \infty.$ 
Then we have
\begin{equation}\label{holdernn}
\int_{\mathbb{X}}\frac{|u|^{p}}{\|u\|^{p}_{L^{p}(\mathbb{X})}}\log\left(\frac{|u|^{p}}{\|u\|^{p}_{L^{p}(\mathbb{X})}}\right)dx\leq \frac{q}{q-p}\log\left(\frac{\|u\|^{p}_{L^{q}(\mathbb{X})}}{\|u\|^{p}_{L^{p}(\mathbb{X})}}\right).
\end{equation}
\end{lem}
Let us denote average value of the function $u$ by
\begin{equation}
    (u)_{R}:=\frac{1}{|B_{R}|}\int_{B_{R}}u(x)dx,
\end{equation}
where $B_{R}=B(e,R)$ is the CC-ball centred at the unit element $e$  with radius $R$. By $(u)_{\infty}$ we denote the limit of the
average value $(u)_{R}$  of $u$ on the ball $B_R$  as $R$ goes to
infinity.
Let us also recall the Poincar\'{e} inequality on stratified groups from \cite{CLW07}.
\begin{thm}[{\cite[Theorem 2.1]{CLW07}}]\label{THM:Lu}
Let $\G$ be a stratified Lie group with homogeneous dimension $Q$. Assume that $1\leq p< Q$, $f\in S^{1,p}_{loc}(\G)$ is in the local Sobolev space with horizontal derivatives in $L^p$, and $|\nabla_{H}u|\in L^{p}(\G)$. As $R$ approaches to infinity, $(u)_{R}$ converges to a finite limit $(u)_{\infty}$. Moreover,
\begin{equation}\label{poincareineq}
    \|u-(u)_{\infty}\|_{L^{p^{*}}(\G)}\leq C\|\nabla_{H}u\|_{L^{p}(\G)},\,\,\,\,\,p^{*}=\frac{Qp}{Q-p},
\end{equation}
where  $C(p,Q)$ is a positive constant depending on $p,Q$ only.
\end{thm}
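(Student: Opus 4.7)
The statement is the classical Poincaré-Sobolev inequality on stratified groups proved in \cite{CLW07}, so I will outline the natural strategy rather than grind through details. The plan is to combine the global Folland-Stein Sobolev embedding on $\G$ with a cutoff argument centred at the origin, and simultaneously to prove the convergence of the averages $(u)_R$ using a local Poincaré inequality on CC-balls.

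First I would establish that $(u)_R$ is Cauchy as $R \to \infty$. For this, the local $(1,p)$-Poincaré inequality on CC-balls (a direct consequence of the subelliptic structure and a standard chaining argument) bounds $|(u)_{2R}-(u)_R|$ by $CR\,|B_R|^{-1/p}\|\nabla_H u\|_{L^p(B_{2R})}$ after reducing to a comparison of averages over concentric balls. Since $|B_R| \simeq R^Q$ and $p<Q$, the factor $R \cdot R^{-Q/p}$ carries a strictly negative exponent, while the tail $\|\nabla_H u\|_{L^p(B_{2R}\setminus B_R)} \to 0$ by the global integrability of $|\nabla_H u|^p$. Together these give convergence of $(u)_R$ to a finite limit $(u)_\infty$.

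Next, I would fix a family of smooth horizontal cutoffs $\phi_R$ equal to $1$ on $B_R$, supported in $B_{2R}$, with $|\nabla_H \phi_R| \leq C/R$, and apply the Folland-Stein Sobolev inequality on $C_c^\infty(\G)$ to the function $\phi_R(u-(u)_{2R})$. The horizontal gradient of the product splits into a principal term $\phi_R \nabla_H u$, which is controlled directly by $\|\nabla_H u\|_{L^p(\G)}$, and an error term $(u-(u)_{2R})\nabla_H \phi_R$, which is controlled by the local Poincaré inequality on $B_{2R}$ in the same spirit as the step above. Letting $R \to \infty$ and using $(u)_{2R} \to (u)_\infty$, Fatou's lemma on the left side delivers the desired global estimate with the correct constant structure.

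The main obstacle is the cutoff error: one must verify that $R^{-1}\|u-(u)_{2R}\|_{L^p(B_{2R})}$ is bounded by $\|\nabla_H u\|_{L^p(B_{2R})}$ uniformly in $R$, which is exactly the content of the scale-invariant local Poincaré inequality on CC-balls. Once this scaling is in hand, the remainder is a routine limit argument exploiting $p<Q$ to ensure finiteness of the $L^{p^*}$-norm on the left-hand side as well as the summability of the tails that drove the convergence of $(u)_R$.
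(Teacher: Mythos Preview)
The paper does not prove this statement at all; Theorem~\ref{THM:Lu} is merely quoted from \cite[Theorem~2.1]{CLW07} as a known result and then used as a black box in the proof of the logarithmic Poincar\'e inequality. There is therefore no ``paper's own proof'' to compare your proposal against.

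That said, your outline is a reasonable sketch of the standard route to such a global Poincar\'e--Sobolev estimate: establish convergence of the ball averages via the local subelliptic Poincar\'e inequality and the volume growth $|B_R|\simeq R^Q$, then globalise by cutting off and applying the Folland--Stein Sobolev embedding, controlling the commutator term $R^{-1}\|u-(u)_{2R}\|_{L^p(B_{2R})}$ again by the scale-invariant local Poincar\'e inequality. One point to be careful about in a full write-up is the first step: from the local Poincar\'e inequality on $B_{2R}$ you get $|(u)_{2R}-(u)_R|\lesssim R^{1-Q/p}\|\nabla_H u\|_{L^p(B_{2R})}$, and to make the dyadic increments summable you should either use the decay of the tail $\|\nabla_H u\|_{L^p(\G\setminus B_R)}$ or sum a geometric series in $R^{1-Q/p}$ against the fixed global norm; your sentence mixes these two mechanisms slightly. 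This is a minor presentational issue rather than a genuine gap.
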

Let us show now that a combination of Lemma \ref{holder} and Theorem \ref{THM:Lu} immediately imply the
logarithmic Poincar\'{e} inequality on stratified groups:
\begin{thm}
Let $\mathbb{G}$ be a stratified group with homogeneous dimension $Q$.
Assume that $1<p<Q$. Then we have 
\begin{equation*}\label{LogPoincare}
\int_{\mathbb{G}}\frac{|u-(u)_{\infty}|^{p}}{\|u-(u)_{\infty}\|^{p}_{L^{p}(\mathbb{G})}}\log\left(\frac{|u-(u)_{\infty}|^{p}}{\|u-(u)_{\infty}\|^{p}_{L^{p}(\mathbb{G})}}\right) dx \leq \frac{Q}{p}\log\left(C\frac{\|\nabla_{H}u\|^{p}_{L^{p}(\G)}}{\|u-(u)_{\infty}\|^{p}_{L^{p}(\mathbb{G})}}\right)
\end{equation*}
with the constant $C=C(p,Q)>0$, for any non-trivial $u\in L^p_1(\G)$. 
\end{thm}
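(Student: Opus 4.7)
The strategy is to simply chain together Lemma \ref{holder} (logarithmic H\"older) and Theorem \ref{THM:Lu} (Poincar\'e). Set $v:=u-(u)_{\infty}$. By hypothesis $u\in L^{p}_{1}(\G)$, and Theorem \ref{THM:Lu} gives $v\in L^{p^{*}}(\G)$ with $p^{*}=\frac{Qp}{Q-p}$; moreover since $u\in L^{p}(\G)$, one verifies that $(u)_{R}\to 0$ as $R\to\infty$ (from $|(u)_R|\leq |B_R|^{-1/p}\|u\|_{L^p(\G)}$ and $|B_R|\to\infty$), so $(u)_\infty=0$ and $v=u\in L^{p}(\G)$. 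Thus $v\in L^{p}(\G)\cap L^{p^{*}}(\G)\setminus\{0\}$ and the two ingredients both apply to $v$.

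Next I would apply Lemma \ref{holder} to $v$ with the pair of exponents $p<p^{*}$. This yields
\begin{equation*}
\int_{\G}\frac{|v|^{p}}{\|v\|^{p}_{L^{p}(\G)}}\log\!\left(\frac{|v|^{p}}{\|v\|^{p}_{L^{p}(\G)}}\right)dx\leq \frac{p^{*}}{p^{*}-p}\log\!\left(\frac{\|v\|^{p}_{L^{p^{*}}(\G)}}{\|v\|^{p}_{L^{p}(\G)}}\right).
\end{equation*}
A short calculation shows $p^{*}-p=\frac{p^{2}}{Q-p}$, so that $\frac{p^{*}}{p^{*}-p}=\frac{Q}{p}$, producing exactly the constant appearing on the right-hand side of the target inequality.

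Finally I would invoke the Poincar\'e inequality \eqref{poincareineq}, which gives $\|v\|_{L^{p^{*}}(\G)}\leq C\|\nabla_{H}u\|_{L^{p}(\G)}$, hence $\|v\|_{L^{p^{*}}(\G)}^{p}\leq C^{p}\|\nabla_{H}u\|_{L^{p}(\G)}^{p}$. Substituting this bound into the right-hand side above and using monotonicity of the logarithm (and absorbing the factor $C^{p}$ into a new constant still called $C$) yields
\begin{equation*}
\int_{\G}\frac{|v|^{p}}{\|v\|^{p}_{L^{p}(\G)}}\log\!\left(\frac{|v|^{p}}{\|v\|^{p}_{L^{p}(\G)}}\right)dx\leq \frac{Q}{p}\log\!\left(C\,\frac{\|\nabla_{H}u\|^{p}_{L^{p}(\G)}}{\|v\|^{p}_{L^{p}(\G)}}\right),
\end{equation*}
which is the desired statement once $v=u-(u)_{\infty}$ is written back.

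There is no substantial obstacle here; the only mildly delicate step is the opening housekeeping, namely checking that $(u)_{\infty}=0$ (so that $u-(u)_{\infty}\in L^{p}(\G)$) and that $v\neq 0$, which is guaranteed by the non-triviality assumption on $u$. Once these are in place, the proof is a mechanical two-line chain: log-H\"older in the exponents $(p,p^{*})$, together with the algebraic identity $\frac{p^{*}}{p^{*}-p}=\frac{Q}{p}$ that matches the Sobolev scaling to the combinatorial constant in the log-H\"older bound, and then Theorem \ref{THM:Lu} to replace the $L^{p^{*}}$-norm of $v$ by the horizontal gradient norm of $u$.
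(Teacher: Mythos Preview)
Your proof is correct and follows essentially the same approach as the paper: apply the logarithmic H\"older inequality (Lemma \ref{holder}) with exponents $(p,p^{*})$, use the identity $\frac{p^{*}}{p^{*}-p}=\frac{Q}{p}$, and then invoke the Poincar\'e inequality (Theorem \ref{THM:Lu}) to bound $\|u-(u)_\infty\|_{L^{p^{*}}}$ by $\|\nabla_H u\|_{L^p}$. Your additional housekeeping step verifying $(u)_\infty=0$ (so that $v\in L^p(\G)$ and Lemma \ref{holder} applies) is a useful clarification that the paper leaves implicit.
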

\begin{proof}
By combining the logarithmic H\"older inequality \eqref{holdernn} with $1< p<q=p^{*}=\frac{Qp}{Q-p}$ and the Poincar\'{e} inequality \eqref{poincareineq}, we have
\begin{equation*}\label{logpoin}
\begin{split}
    \int_{\mathbb{G}}\frac{|u-(u)_{\infty}|^{p}}{\|u-(u)_{\infty}\|^{p}_{L^{p}(\mathbb{G})}}\log\left(\frac{|u-(u)_{\infty}|^{p}}{\|u-(u)_{\infty}\|^{p}_{L^{p}(\mathbb{G})}}\right) dx  &\leq  \frac{p^{*}}{p^{*}-p}\log\left(\frac{\|u-(u)_{\infty}\|^{p}_{L^{q}(\mathbb{G})}}{\|u-(u)_{\infty}\|^{p}_{L^{p}(\mathbb{G})}}\right)\\&
\stackrel{\eqref{poincareineq}}\leq  \frac{p^{*}}{p^{*}-p} \log\left(C\frac{\|\nabla_{H}u\|^{p}_{L^{p}(\G)}}{\|u-(u)_{\infty}\|^{p}_{L^{p}(\mathbb{G})}}\right)\\&
=\frac{Q}{p}\log\left(C\frac{\|\nabla_{H}u\|^{p}_{L^{p}(\G)}}{\|u-(u)_{\infty}\|^{p}_{L^{p}(\mathbb{G})}}\right),
\end{split}
\end{equation*}
completing the proof.
\end{proof}


\begin{thebibliography}{BBGM12}
\bibitem[Ada79]{Ad79}
R. A. Adams.
\newblock General Logarthmic Sobolev Inequalities and Orlicz Imbeddings.
\newblock {\em J. Funct. Anal.}, 34:292--303, 1979.

\bibitem[AC79]{AC79}
R. A. Adams and F. H. Clarke.
\newblock Gross's Logarithmic Sobolev Inequality: A Simple Proof.
\newblock {\em Amer. J. Math.}, 101(6):1265--1269, 1979.


\bibitem[AR20]{AR20}
R. Akylzhanov and M. Ruzhansky.
\newblock $L^p$-$L^q$ multipliers on locally compact groups.
\newblock {\em J. Funct. Anal.}, 278(108324):1--49, 2020.

\bibitem[AB00]{AB00}
C. An\'e, S. Blach\'ere, D. Chafa\"i, P. Foug\'eres, I. Gentil, F. Malrieu, C. Roberto and G. Scheffer.
\newblock {\em Sur les in\'egalites de Sobolev logarithmiques.} With a preface by D. Bakry and M. Ledoux.
\newblock  Panoramas et Synth\'eses, 10, {\em Soc. Math. Fr, Paris}, 2000. xvi+217 pp.













\bibitem[BLU07]{BLU07}
A. Bonfiglioli, E. Lanconelli and F. Uguzzoni. {\em Stratified Lie Groups and Potential Theory for their Sub-Laplacians.} Springer, Berlin, 2007.


\bibitem[BZ21]{BZ21a}
E. Bou Dagher and B. Zegarlinski.
\newblock Coercive Inequalities on Carnot Groups: Taming Singularities.
\newblock {\em arXiv:2105.03922}, 2021.







\bibitem[BPTV19]{BPTV19}
 T. Bruno, M. Peloso, A. Tabacco and M. Vallarino.  Sobolev spaces on Lie groups: embedding theorems and algebra properties. {\em J. Funct. Anal.}, 276(10):3014--3050, 2019.
 
 \bibitem[BPV21]{BPV21}
 T. Bruno, M. Peloso and M. Vallarino.
 \newblock The Sobolev emedding constant on Lie groups.
 \newblock {\em arXiv:2006.07056v2}, 2021.
 
 


\bibitem[CFZ21]{CFZ21}
M. Chatzakou, S. Federico and B. Zegarlinski.
\newblock $q$-Poncar\'e inequalities on Carnot groups with filiform type Lie algebra.
\newblock {\em arXiv:2007.04689v3}, 2021.

\bibitem[CKR21a]{CKR21a}
M. Chatzakou, A. Kassymov and M. Ruzhansky. Anisotropic Shannon inequality. {\em arXiv:2106.14182}, 2021.

\bibitem[CKR21b]{CKR21b}
M. Chatzakou, A. Kassymov and M. Ruzhansky. Logarithmic Sobolev inequalities on Lie groups. {\em arXiv:2106.15652}, 2021.

\bibitem[CLW07]{CLW07}
W.S. Cohn, G. Lu and P. Wang. Sub-elliptic global high order Poincar\'{e} inequalities in stratified Lie groups and applications. {\em  J.  Funct.  Anal.}, 249(2):393-424, 2007.

\bibitem[Dav89]{Dav89}
E. B. Davies.
\newblock {\em Heat kernels and spectral theory}. Cambridge Tracts in Mathematics, vol. 92, Cambridge University Press, Cambridge, 1989.

\bibitem[Das21]{Das21}
U. Das. On weighted logarithmic-Sobolev \& logarithmic-Hardy inequalities. 
{\em J. Math. Anal. Appl.,} 496(1):124796, 2021.

\bibitem[DD03]{DD03}
M. Del Pino and J. Dolbeault.
\newblock The optimal Euclidean $L^p$-Sobolev logarithmic inequality.
\newblock {\em J. Funct. Anal.}, 197(1):151--161, 2003.

\bibitem[DDFT10]{DDFT10}
M. Del Pino, J. Dolbeault, S. Filippas and A. Tertikas.
\newblock 
A logarithmic Hardy inequality.
\newblock 
{\em J. Funct. Anal.}, 259(8):2045--2072, 2010.


\bibitem[FNQ18]{FNQ18}
T.~Feng, P.~Niu and J.~Qiao.
\newblock Several logarithmic Caffarelli-Kohn-Nirenberg inequalities and applications.
\newblock{\em J. Math. Anal. Appl.}, 457:822--840, 2018.

\bibitem[FR17]{FR17}
V. Fischer and M. Ruzhansky. Sobolev spaces on graded Lie groups. {\em Annales de l'Institut Fourier} 67(4):1671-1723, 2017.

\bibitem[FR16]{FR16}
V. Fischer and M. Ruzhansky. {\em Quantization on nilpotent Lie groups}. Progress in Mathematics, Vol. 314, Birkh\"{a}user, 2016. 

\bibitem[FS82]{FS}
G. Folland and E. M. Stein.
\newblock {\em Hardy spaces on homogeneous groups}.
\newblock Vol. 28 of Math. Notes, Princeton University Press, Princeton, N.J., 1982.



\bibitem[Gro75]{Gro75}
L. Gross.
\newblock Logarithmic Sobolev Inequalities.
\newblock {\em Amer. J. Math.}, 97(4):1061--1083, 1975.

\bibitem[GZ03]{GZ03}
A. Guionnet and B. Zegarlinski.
\newblock Lectures on logarithmic Sobolev inequalities.
\newblock {\em S\'eminaire de Probabilit\'es, XXXVI}, Lect. not. Math., 1801:1--134, 2003.

\bibitem[HMM05]{HMM05}
W. Hebisch, G. Mauceri and S. Meda. Spectral multipliers for Sub-Laplacians with drift on Lie groups. {\em Math. Z.}, 251(4):899--927, 2005.

\bibitem[HZ10]{HZ09}
W. Hebisch and B. Zegarlinski.
\newblock Coercive inequalities on metric measure spaces.
\newblock {\em J. Funct. Anal.}, 258:814--851, 2010.

\bibitem[HN79]{HN79}
B.~Helffer and J.~Nourrigat.
\newblock Caracterisation des op\'erateurs hypoelliptiques homog\`enes
  invariants \`a gauche sur un groupe de {L}ie nilpotent gradu\'e.
\newblock {\em Comm. Partial Differential Equations}, 4(8):899--958, 1979.




\bibitem[KRS20]{KRS20}
A. Kassymov, M. Ruzhansky and D. Suragan. \newblock Fractional logarithmic inequalities and blow-up results with logarithmic nonlinearity on homogeneous groups.
\newblock {\em Nonlinear Differ. Equ. Appl.} 27:7, 2020.

\bibitem[KS20]{KS20}
A. Kassymov and D. Suragan. \newblock Fractional Hardy–Sobolev Inequalities and Existence Results for Fractional Sub-Laplacians.
\newblock {\em Journal of Mathematical Sciences,} 250(2):337--350, 2020.

\bibitem[Rel56]{Rel56}
F. Rellich.
\newblock Halbbeschränkte Diﬀerentialoperatoren höherer Ordnung, in: Proceedings of the International Congress of Mathematicians, vol. III, Amsterdam, 1954, Erven P. Noordhoﬀ N.V./North-Holland Publishing Co., Groningen/Amsterdam, 1956, pp. 243–250.











\bibitem[Ros76]{Ros76}
J. Rosen.
\newblock Sobolev inequalities for weight spaces and supercontractivity.
\newblock {\em Trans. Amer. Math. Soc.}, 222:367--376, 1976.



\bibitem[RTY20]{RTY20}
M. Ruzhansky, N. Tokmagambetov and N. Yessirkegenov.
\newblock Best constants in Sobolev and Gagliardo-Nirenberg inequalities on graded groups and ground states for higher order nonlinear subelliptic equations.
\newblock {\em Calc. Var. Partial Differential Equations}, \textbf{59}, no. 175, 23pp, 2020.

\bibitem[RY18a]{RY18a}
M. Ruzhansky and N. Yessirkegenov. Hardy, Hardy-Sobolev, Hardy-Littlewood-Sobolev and Caffarelli-Kohn-Nirenberg inequalities on general Lie groups. {\em  arXiv:1810.08845}, 2018.

\bibitem[RY18b]{RY18b}
M. Ruzhansky and N. Yessirkegenov. Hypoelliptic functional inequalities. {\em  arXiv:1805.01064}, 2018.



\bibitem[RS19]{RS19}
M. Ruzhansky and D. Suragan.
\newblock \textit{ Hardy inequalities on homogeneous groups: 100 years of Hardy inequalities}.
\newblock Progress in Math., Vol. 327, Birkh\"auser/Springer, Cham, 2019. xvi+571pp.



\bibitem[Tos97]{Tos97}
G. Toscani.
\newblock Sur l' in\'egalit\'e logarithmique de Sobolev.
\newblock {\em C.R. Acad. Paris}, 324(1):689--694, 1997.



\bibitem[SZ92]{SZ92}
D. W. Stroock and B. Zegarlinski.
\newblock The logarithmic Sobolev inequality for continuous spin systems on a lattice.
\newblock {\em J. Funct. Anal.}, 104(2):299--326, 1992.


\bibitem[VSCC93]{VSCC93} N. Th. Varopoulos,
L. Saloff-Coste and T. Coulhon.
\newblock {\em Analysis and geometry on groups}.
\newblock Cambridge: Cambridge University Press, 1993.

\bibitem[Wei78]{Wei78}
F. B. Weissler.
\newblock Logarithmic Sobolev inequalities for the heat-diffusion semigroup.
\newblock {\em Trans. Amer. Math. Soc.}, 237:255--269, 1978.

\end{thebibliography}
\end{document}